\numberwithin{equation}{section}
\newtheorem{thm}{Theorem}
\newtheorem{prp}{Proposition}[section]
\newtheorem{lmm}[prp]{Lemma}
\newtheorem{crl}[prp]{Corollary}
\newtheorem{cnj}[prp]{Conjecture}
\theoremstyle{definition}
\newtheorem{dfn}[prp]{Definition}
\newtheorem{eg}[prp]{Example}
\theoremstyle{remark}
\newtheorem{rmk}[prp]{Remark}
\def\BE#1{\begin{equation}\label{#1}}
\def\EE{\end{equation}}
\def\eref#1{(\ref{#1})}
\def\BEnum#1{\begin{enumerate}[label=#1,leftmargin=*,topsep=-10pt,itemsep=-3pt]}
\def\EEnum{\end{enumerate}}
\def\ov#1{\overline{#1}}
\def\wt#1{\widetilde{#1}}
\def\tn#1{\textnormal{#1}}
\def\lra{\longrightarrow}
\def\C{\mathbb C}
\def\fM{\mathfrak M}
\def\cN{\mathcal N}
\def\cO{\mathcal O}
\def\R{\mathbb R}
\def\cT{\mathcal T}
\def\Z{\mathbb Z}
\def\fd{\mathfrak d}
\def\nd{\textnormal{d}}
\def\fd{\mathfrak d}
\def\ff{\mathfrak f}
\def\na{\nabla}
\def\ze{\zeta}
\def\Ga{\Gamma}
\def\Si{\Sigma}
\def\cok{\tn{cok}}
\def\nd{\tn{d}}
\def\ev{\tn{ev}}
\def\ord{\tn{ord}}
\def\supp{\tn{supp}}
\def\prt{\partial}
\def\dbar{\ov\partial}
\def\supp{\textnormal{supp}}
\def\M{\mathfrak{M}}
\def\f{\mathfrak{f}}
\def\d{\textnormal{d}}
\def\dvol{\textnormal{d}vol}
\def\volumeformm{\nu}
\def\bydefinition{:=}
\def\bydefinitiontwo{:=}
\def\curindby#1{T_{#1}}
\def\conestr#1{\Lambda_{#1}}
\newcounter{temp}
\title{Positivity of Intersections and\\Tameness of Almost Complex 4-manifolds}
\author{Spencer Cattalani\thanks{Partially supported by NSF grant DMS 1901979 and by the Simons Foundation}}
\date{\today}
\begin{document}

\maketitle

\begin{abstract}
We prove that pseudoholomorphic curves intersect complex 2-cycles positively in almost complex 4-manifolds. This makes possible a general and conceptually simple proof that an almost complex 4-manifold with many curves admits a taming symplectic structure, as envisioned by Gromov. Furthermore, we prove that the positivity of intersections between pseudoholomorphic curves is stable, in a geometric sense.
\end{abstract}

\tableofcontents

\section{Introduction}
\label{Introduction_sec}
Gromov's introduction of pseudoholomorphic curves to symplectic geometry revolutionized the field. Such techniques are particularly fruitful in dimension 4. Among the many results proved in this setting are the uniqueness of the symplectic structure on $\mathbb{CP}^2$ \cite[Theorem~B]{TaubesUnique}, the classification of rational and ruled symplectic manifolds \cite{McDuff}, and the retractability of the symplectomorphism group of the Fubini-Study form on $\mathbb{CP}^2$ onto its isometry group \cite[Remark~0.3.C]{Gr}. A basic fact underlying this theory is that every symplectic form tames some almost complex structure. This paper is concerned with the converse: when does an almost complex manifold admit a taming symplectic form?
Sullivan provided a framework for answering this question in terms of his notion of \textit{complex cycles}:

\begin{prp}[{\cite[Theorem III.2]{Sullivan}}]\label{taming_prp}
Let $(X,J)$ be a closed almost complex manifold. If \hbox{$[T] \neq 0 \in H_2(X,\R)$} for every nonzero complex cycle $T$ on $X$, then $(X,J)$ is tamed by a symplectic form.
\end{prp}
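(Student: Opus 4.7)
The plan is to prove both assertions by Hahn-Banach separation in the pairing between $\Omega^2(X)$, endowed with its $C^\infty$ Fr\'echet topology, and the space of de Rham 2-currents. Let $\mathcal{T}\subset\Omega^2(X)$ denote the open convex cone of 2-forms $\omega$ with $\omega(v,Jv)>0$ for every nonzero $v\in TX$. The cone of positive $(1,1)$-currents on $(X,J)$ -- those currents pairing nonnegatively with every form in $\mathcal{T}$ -- is the dual cone $\mathcal{T}^\vee$, and a complex cycle is by definition a $\partial$-closed element of $\mathcal{T}^\vee$. The cone $\mathcal{T}$ is nonempty: it contains the fundamental 2-form of any $J$-Hermitian metric, and this single choice drives both arguments.

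For existence of a nonzero complex cycle, let $B^2(X)\subset\Omega^2(X)$ be the closed subspace of exact 2-forms, and suppose $\mathcal{T}\cap B^2(X)$ is empty. Hahn-Banach separation of the nonempty open convex set $\mathcal{T}$ from the closed subspace $B^2(X)$ yields a nonzero 2-current $T$ satisfying $\langle T,\omega\rangle>0$ for every $\omega\in\mathcal{T}$ and $\langle T,d\alpha\rangle=0$ for every 1-form $\alpha$. The first condition places $T$ in $\mathcal{T}^\vee$, so $T$ is a positive $(1,1)$-current; the second gives $\partial T=0$. Hence $T$ is a nonzero complex cycle. Conversely, if $\mathcal{T}\cap B^2(X)$ were nonempty, there would exist an exact taming form $\omega=d\alpha$; but every taming form is automatically non-degenerate, so $\omega^n>0$ pointwise on the closed $2n$-manifold $X$, while $\omega^n=d(\alpha\wedge\omega^{n-1})$ integrates to zero by Stokes -- a contradiction. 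Thus $\mathcal{T}\cap B^2(X)$ is empty and a nonzero complex cycle exists.

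For the taming symplectic form, let $Z^2(X)$ be the closed subspace of closed 2-forms. Assume for contradiction that $\mathcal{T}\cap Z^2(X)$ is empty. Hahn-Banach again produces a nonzero 2-current $T$ with $\langle T,\omega\rangle>0$ on $\mathcal{T}$ and $\langle T,\omega\rangle=0$ on $Z^2(X)$. As before, the first condition forces $T\in\mathcal{T}^\vee$. The second says $T$ annihilates every closed 2-form, so $T$ factors through $d:\Omega^2(X)\to\Omega^3(X)$; extending the induced functional on $d\Omega^2(X)$ to all of $\Omega^3(X)$ gives a 3-current $S$ with $T=\partial S$. Consequently $T$ is itself closed (since $\partial^2=0$) and has vanishing homology class $[T]=0\in H_2(X,\R)$, contradicting the hypothesis. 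Therefore $\mathcal{T}\cap Z^2(X)$ is nonempty, and any element of this intersection is a taming symplectic form.

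The one step that is not purely formal is the identification of $\mathcal{T}^\vee$ as the cone of positive $(1,1)$-currents. This requires a pointwise analysis on the oriented 2-Grassmannian bundle of $TX$: one must verify that the smooth 2-forms in $\mathcal{T}$ separate $J$-complex 2-planes from all other oriented 2-planes strongly enough that any current pairing nonnegatively with every $\omega\in\mathcal{T}$ has its Grassmannian tangent measure concentrated on the $J$-complex locus with nonnegative weights. Once this pointwise characterization is in hand, the two Hahn-Banach applications above are symmetric and essentially formal.
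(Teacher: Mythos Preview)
The paper does not supply its own proof of this proposition; it is quoted from Sullivan with the citation \cite[Theorem III.2]{Sullivan}. Your argument is essentially Sullivan's original Hahn--Banach duality proof, and it is correct.

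One remark: your final paragraph is unnecessary and slightly misdirected given the paper's definition. A complex cycle here is a closed 2-current $T$ with $T(\alpha)>0$ for every strictly positive $\alpha$, i.e.\ for every $\alpha\in\mathcal{T}$. The currents your two Hahn--Banach separations produce satisfy this strict inequality \emph{by construction}, so no identification of $\mathcal{T}^\vee$ with a Grassmannian-defined cone is needed. (Indeed $\mathcal{T}^\vee$, defined by the non-strict inequality, is not literally the set of complex cycles as defined in this paper, so your opening sentence is slightly off---but this does not affect the argument, since the separating functional over an open cone is strictly positive on it.) The two separations, together with the Stokes argument showing $\mathcal{T}\cap B^2(X)=\emptyset$, already constitute the complete proof.
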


The definition and basic properties of complex cycles are recalled in Section~\ref{structure_cycles_subsection}. It is also shown in~\cite[Theorem III.2]{Sullivan} that a closed almost complex manifold always contains nonzero complex cycles. They are studied in a similar context in \cite{Cattalani}. Unfortunately, they are generally quite singular; see \cite{Gue} for some pathological examples. One would therefore like to answer the above question in terms of more familiar objects, such as pseudoholomorphic curves.\\

Gromov proposed such an answer in \cite[Remark~2.4.$\textnormal{A}'$]{Gr}. This remark says that an almost complex 4-manifold with many pseudoholomorphic curves should admit a taming symplectic form. The argument presented therein comes from integral geometry. It is carried out in \cite[Section 10]{McKay}. Unfortunately, his reasoning requires the manifold to be swept out by smooth families of transversely intersecting curves. We prove that Gromov's prediction holds in general. All notions appearing in the statement of Theorem \ref{Curves2Symp_thm} below are recalled in Section \ref{deformation_sec}.

\begin{thm}\label{Curves2Symp_thm}
Let $(X,J)$ be a closed almost complex 4-manifold such that every pair of points can be joined by a closed connected pseudoholomorphic curve~$C$. Suppose also that $(X,J)$ contains
\begin{enumerate}[label=(\alph*),leftmargin=*]

\item\label{fc_it} a free pseudoholomorphic curve, or
\item\label{imgn_it} an immersed pseudoholomorphic curve of genus $g$ with $n$ nodes and self-intersection number at least $2g+2n$, or
\item\label{erc_it} an embedded rational curve with nonnegative self-intersection number, or
\item\label{umerc_it} uncountably many embedded rational curves.
\setcounter{temp}{\value{enumi}}
\end{enumerate}
Then $(X,J)$ admits a taming symplectic form $\omega$.
\end{thm}

In case \ref{erc_it} or \ref{umerc_it} holds, it then follows from \cite{McDuff} and \cite[Theorem A]{Wendl} that $\omega$ may be chosen so that $(X,\omega)$ is obtained by blowing up $\mathbb{CP}^2$ with the Fubini-Study form or a symplectic sphere bundle over a surface. We note that the curves $C$ in Theorem \ref{Curves2Symp_thm} are not assumed to be immersed or irreducible. Neither are they assumed to lie in smooth families or to intersect transversely. The sharpness of Theorem \ref{Curves2Symp_thm} is discussed in Section \ref{remarks_sec}.\\

Theorem \ref{Curves2Symp_thm} is a consequence of the simple geometric principle of \textit{positivity of intersections}. Classically, this states that two closed pseudoholomorphic curves which intersect in an almost complex 4-manifold either coincide on an irreducible component or have positive intersection number. We extend this principle to include the case of a pseudoholomorphic curve intersecting a complex cycle in an almost complex 4-manifold. Under the assumptions of Theorem~\ref{Curves2Symp_thm}, we are able to show that every nonzero complex cycle intersects some curve positively and is thus nontrivial in homology. Theorem~\ref{Curves2Symp_thm} then follows from Proposition~\ref{taming_prp}.\\

It is nontrivial to extend even the statement of positivity of intersections to complex cycles, as the ``irreducible components'' of a complex cycle have not yet been defined in a suitable manner. To circumvent this difficulty, we use an alternative characterization in terms of a collection of test forms. Specifically, a 2-form $\varphi$ on an almost complex manifold $(X,J)$ is called \textit{semipositive} if $\varphi(v,Jv) \geq 0$ for any $v \in TX$. Hermitian forms, taming symplectic forms, and the form $i\d z_1 \d \overline{z_1}$ on $\mathbb{C}^n$ are semipositive. They can be constructed locally quite easily, for instance by multiplying a Hermitian form by a nonnegative bump function. This makes them very good probes for the behavior of the tangent planes to holomorphic curves. For example, an irreducible curve $C$ is an irreducible component of a curve~$S$ if and only if there is no semipositive form $\varphi$ such that $\int_C \varphi > 0$ and $\int_{S} \varphi = 0$. The assumption in the following theorem is directly inspired by this fact.

\begin{thm}\label{PosInter_thm}
Let $(X,J)$ be a closed almost complex 4-manifold. Let $C$ be a closed irreducible pseudoholomorphic curve in $X$ and $T$ be a complex cycle on $X$. If $C \cap \supp(T) \neq \emptyset$ and there is a semipositive form $\varphi$ such that
$$\int_C \varphi > 0 \quad \textnormal{and} \quad T(\varphi) = 0,$$
then
$\langle [T], [C] \rangle > 0.$
\end{thm}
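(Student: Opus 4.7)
My plan is to realize $\langle [T], [C]\rangle$ as $T(\tau)$ for a closed semipositive 2-form $\tau$ representing $\PD[C]$, and then bootstrap the weak inequality $T(\tau) \geq 0$ (automatic from $T$ being a positive current) to a strict one using the hypothesis on $\phi$.

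Since $C$ is pseudoholomorphic in a 4-manifold, the normal bundle on its smooth locus is a $J$-complex line bundle. With a $J$-compatible Hermitian metric on a tubular neighborhood of $C$, the standard Thom-form construction (a radial cutoff applied to the curvature of a Hermitian connection on this bundle) yields a closed, semipositive 2-form $\tau$ supported near $C$ and representing $\PD[C]$; isolated singularities and self-intersections of $C$ can be handled branch-by-branch. Given this $\tau$, the identity $\langle [T],[C]\rangle = T(\tau)$ holds by Poincar\'e duality, and positivity of the complex cycle $T$ forces $T(\tau) \geq 0$.

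To upgrade to strict positivity I would argue by contradiction. Assume $T(\tau)=0$. Using a Demailly-type polar decomposition of a positive current as a positive measure $\|T\|$ times a measurable field $\Theta$ of $J$-complex tangent lines --- adapted to the almost complex setting through Sullivan's framework --- the vanishing $T(\tau)=0$ forces $\Theta$ to lie in the null cone of $\tau$ for $\|T\|$-a.e.\ point, and similarly $T(\phi)=0$ forces $\Theta$ to lie in the null cone of $\phi$. But $\int_C \phi > 0$ means $\phi$ is strictly positive on $T_p C$ on a subset of $C$ of positive measure; so near any point $p \in C \cap \supp(T)$, the field $\Theta$ cannot be tangent to $C$. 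It must therefore be transverse to $C$, and via the Micallef--White local normal form this transverse local configuration contributes strictly positively to $T(\tau)$, contradicting $T(\tau)=0$.

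The step I expect to be most delicate is making the decomposition $T = \|T\|\cdot \Theta$ rigorous in the almost complex setting and matching it with Micallef--White coordinates, which are only pseudoholomorphic to high order along $C$. A secondary issue is ensuring the semipositive Thom representative $\tau$ can be constructed across the singular and self-intersection points of $C$ without losing either closedness or semipositivity. I expect the bulk of the work to lie in these local-to-global measure-theoretic bookkeeping steps, with the contradiction above serving as the organizing principle.
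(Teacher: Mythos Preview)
Your approach has a genuine gap at the first step: a closed semipositive representative $\tau$ of $\PD[C]$ need not exist. If it did, then $\int_C \tau = [C]\cdot[C]$, while semipositivity of $\tau$ and pseudoholomorphicity of $C$ force $\int_C \tau \ge 0$; hence no such $\tau$ exists when $[C]\cdot[C] < 0$, a case the theorem must cover. The Thom form you propose does give a closed representative, but its restriction to $C$ is the curvature of the normal bundle, which cannot be semipositive when that bundle has negative degree. The paper circumvents this obstruction by not asking for the Poincar\'e dual itself to be semipositive. Proposition~\ref{PD_prp} constructs a closed $\omega_C$ representing $\PD[C]$ together with a constant $K>0$ such that $\omega_C + K\phi$ is semipositive and strictly positive on a neighborhood of $C$. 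The form $\omega_C$ is built by integration along the fiber over a moduli space of deformations of $C$ as a $J$-holomorphic curve \emph{with boundary} --- Lemma~\ref{boundary_conditions_lmm} shows boundary conditions can be chosen to make $C$ arbitrarily flexible regardless of its self-intersection --- followed by coning off the boundaries and applying de~Rham smoothing; the non-holomorphic portion of the construction is confined to $\supp(\phi)$, so adding $K\phi$ restores semipositivity there. Theorem~\ref{PosInter_thm} then follows in one line from Lemma~\ref{pos_on_supp_lmm}: $0 < T(\omega_C + K\phi) = T(\omega_C) + K\,T(\phi) = T(\omega_C) = \langle[T],[C]\rangle$.

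Your contradiction argument has a separate gap even granting a semipositive $\tau$: from $T(\phi)=0$ you learn about the tangent field $\Theta$ only on $\supp(T)$, while $\int_C \phi > 0$ tells you about $\phi$ along $C$. These constraints need not interact, since $C \cap \supp(T)$ might be a single point at which $\phi$ vanishes. You therefore cannot conclude that $\Theta$ is transverse to $C$ there, and the Micallef--White step has no purchase.
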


If $C \not\subset \supp(T)$, there is a nonnegative bump function $f$ which is positive somewhere on $C$ and is identically zero on $\supp(T)$. If $\omega$ is a Hermitian form on~$X$, then $f\omega$ is a semipositive form satisfying the assumption in Theorem \ref{PosInter_thm}. Therefore, the following corollary holds.

\begin{crl}\label{pos_int_crl}
Let $(X,J)$ be a closed almost complex 4-manifold. Let $C$ be a closed irreducible pseudoholomorphic curve in $X$ and $T$ be a complex cycle on $X$. If $C \,\cap\, \supp(T) \neq \emptyset$ and \hbox{$C \not\subset \supp(T),$} then
$\langle [T], [C] \rangle > 0.$
\end{crl}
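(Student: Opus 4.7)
The plan is to apply Theorem \ref{PosInter_thm} directly, with the semipositive form $\phi$ constructed by the recipe indicated in the paragraph preceding the corollary: a Hermitian form multiplied by a bump function supported away from $\supp(T)$. All of the real content is contained in Theorem \ref{PosInter_thm}, so the only task is to produce the right $\phi$ and verify its three properties (semipositivity, positive integral on $C$, and annihilation by $T$).

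Concretely, I would first choose a Hermitian form $\omega$ on $(X,J)$, which exists for any almost complex manifold: pick any $J$-invariant Riemannian metric $g$ (obtainable by averaging an arbitrary metric over $J$) and set $\omega(u,v) := g(Ju,v)$. This form satisfies $\omega(v,Jv) = g(Jv,Jv) = |Jv|^2_g \geq 0$, so it is semipositive in the sense of the paper. Next, since $C$ is not contained in the closed set $\supp(T)$, there is a point $p \in C \setminus \supp(T)$. Choose an open neighborhood $U$ of $p$ whose closure is disjoint from $\supp(T)$, and let $f \colon X \to [0,\infty)$ be a smooth bump function with $f(p) > 0$ and $\supp(f) \subset U$. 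Set $\phi := f\omega$.

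The verification is routine. Semipositivity of $\phi$ is immediate from $\phi(v,Jv) = f(x)\omega(v,Jv)$, the product of two nonnegative quantities. For the integral, $\omega$ restricts to a nonnegative (indeed $g$-area) form on the pseudoholomorphic curve $C$ because $T_qC$ is $J$-invariant at every smooth point, so $\int_C f\omega \geq 0$; moreover $f(p) > 0$ and $f$ is continuous on $C$, ensuring $\int_C \phi > 0$. Finally $T(\phi) = 0$ because the form $f\omega$ vanishes identically on $U^c \supset \supp(T)$, and $T$ is a current supported on $\supp(T)$. Together with the hypothesis $C \cap \supp(T) \neq \emptyset$, this places us exactly in the situation of Theorem \ref{PosInter_thm}, which yields $\langle [T],[C]\rangle > 0$.

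There is no real obstacle here: once Theorem \ref{PosInter_thm} is in hand, the corollary is a short exercise in bump-function analysis. The only mildly delicate point is confirming that multiplying a Hermitian form by a smooth nonnegative function preserves semipositivity and that the resulting compactly supported form is annihilated by $T$, both of which follow from the definitions.
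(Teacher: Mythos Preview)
Your proof is correct and follows exactly the approach sketched in the paragraph preceding the corollary: construct $\phi = f\omega$ with $\omega$ Hermitian and $f$ a nonnegative bump function supported off $\supp(T)$ but positive somewhere on $C$, then invoke Theorem~\ref{PosInter_thm}. The paper gives only that one-sentence sketch, and your proposal simply fills in the routine verifications.
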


The difficulty in proving the positivity of intersections is that the curves can intersect at singular points. To resolve this issue, one deforms the curves locally to get a transverse intersection. The issues with singularities are magnified in the context of Theorem \ref{PosInter_thm}, as a complex cycle can be much more singular than any holomorphic curve. Therefore, we develop a refined deformation scheme, which is essentially founded on the \textit{Oka principle} (alternatively, the \textit{h-principle}); see the fourth and fifth paragraphs of Section~\ref{future_subsection}. Our method also allows one to prove that the positivity of intersections is stable:

\begin{thm}\label{stable_pos_int_thm}
Let $(X,J)$ be a closed almost complex 4-manifold, equipped with a metric. Let $C$ be a closed irreducible pseudoholomorphic curve in $X$. For every $\varepsilon > 0$, there exists $\delta > 0$ such that if $S \subset X$ is a closed pseudoholomorphic curve,
$\langle[S], [C] \rangle \leq 0$,
and at least one point of $S$ is $\delta$-close to $C$, then every point of $C$ is $\varepsilon$-close to $S$.
\end{thm}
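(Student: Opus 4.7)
I will argue by contradiction and reduce to Theorem~\ref{PosInter_thm}, using a local deformation to supply the support-meets-$C$ hypothesis of that theorem. Suppose the conclusion fails: there exist $\varepsilon > 0$ and a sequence $\{S_n\}$ of closed pseudoholomorphic curves with $\langle [S_n],[C]\rangle \leq 0$, points $p_n \in S_n$ with $d(p_n, C) < 1/n$, and points $q_n \in C$ with $d(q_n, S_n) \geq \varepsilon$. Passing to a subsequence, $p_n \to p_\infty \in C$ and $q_n \to q_\infty \in C$, with $p_\infty \neq q_\infty$ since $d(p_n, q_n) \geq \varepsilon - 1/n$.

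First I reduce to the case $S_n \cap C = \emptyset$. If $C$ were an irreducible component of $S_n$, then $q_n \in C \subset S_n$ would give $d(q_n, S_n) = 0$, contradicting $d(q_n,S_n) \geq \varepsilon$. So for large $n$, $C$ is not a component of $S_n$. Classical positivity in the form of Corollary~\ref{pos_int_crl} gives $\langle [S_i],[C]\rangle \geq 0$ for every irreducible component $S_i$ of $S_n$, with strict inequality if $S_i \cap C \neq \emptyset$. Summing these nonnegative terms and combining with $\langle [S_n],[C]\rangle \leq 0$ forces every summand to vanish, whence $S_n \cap C = \emptyset$ and $\langle [S_n],[C]\rangle = 0$.

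I now set up the semipositive form and the deformation. Fix a Hermitian form $\omega$ on $X$ and a smooth nonnegative bump function $\rho$ supported in $B_{\varepsilon/3}(q_\infty)$ with $\rho(q_\infty) > 0$, and set $\phi = \rho\omega$. Then $\phi$ is semipositive, $\int_C \phi > 0$ because $q_\infty \in C$, and $\supp(\phi) \cap S_n = \emptyset$ for $n$ sufficiently large (using $d(q_n, S_n) \geq \varepsilon$ and $q_n \to q_\infty$), so $S_n(\phi) = 0$. Because $S_n \cap C = \emptyset$, Theorem~\ref{PosInter_thm} is not yet applicable to the cycle $S_n$ itself. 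I therefore invoke the local deformation scheme developed for the proof of Theorem~\ref{PosInter_thm}, applied inside a small ball $U_n$ around $p_n$ of radius $\ll \varepsilon$, to produce a complex cycle $T_n$ on $X$ with $[T_n] = [S_n]$, with $T_n = S_n$ outside $U_n$, and with $\supp(T_n) \cap C \neq \emptyset$. Since $U_n \cap \supp(\phi) = \emptyset$, also $T_n(\phi) = 0$. Theorem~\ref{PosInter_thm} now yields $\langle [T_n],[C]\rangle > 0$, hence $\langle [S_n],[C]\rangle > 0$, contradicting $\langle [S_n],[C]\rangle = 0$.

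The main obstacle is the construction of $T_n$. One must exploit the fact that $S_n$ lies within $1/n$ of $C$ near $p_n$ to insert a small, homologically trivial complex-cycle bridge that produces an intersection with $C$, entirely within a ball of radius less than $\varepsilon$. This quantitative local modification is precisely the Oka/h-principle input underlying the proof of Theorem~\ref{PosInter_thm}; what is needed here is to track the scale of that deformation, so the bridge can be confined to any preassigned neighborhood of $p_n$, leaving the probe form $\phi$ untouched.
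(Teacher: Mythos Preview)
Your reduction to the case $S_n\cap C=\emptyset$ and $\langle[S_n],[C]\rangle=0$ is fine, but the heart of your argument---the construction of the complex cycle $T_n$---is a genuine gap. You assert that the ``local deformation scheme developed for the proof of Theorem~\ref{PosInter_thm}'' lets you modify $S_n$ inside a small ball $U_n$ so that the result is still a complex cycle, still represents $[S_n]$, and now meets~$C$. But that is not what the paper's deformation scheme does. The scheme (Proposition~\ref{PD_prp}) deforms the \emph{fixed} curve~$C$, allowing it to be non-holomorphic on a tiny disk inside $\supp(\phi)$, and then integrates over the resulting family to produce a closed \emph{form} $\omega_C$ Poincar\'e dual to~$C$. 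It never produces a modified complex cycle, and it never touches the other curve~$S$. Turning your idea into a proof would require building, inside a ball, a closed positive $(1,1)$-current that is null-homologous, agrees with $S_n$ near the boundary of the ball, and touches~$C$; nothing in the paper supplies such a construction, and it is not clear it can be done in general (a non-holomorphic ``bridge'' would spoil positivity, while a holomorphic one would be rigid).

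The paper's proof avoids all of this by working directly with Proposition~\ref{PD_prp} rather than passing through Theorem~\ref{PosInter_thm}. Cover $C$ by finitely many $\varepsilon$-balls $B_i$, take a semipositive $\phi_i$ supported in $B_i$ with $\int_C\phi_i>0$, and apply Proposition~\ref{PD_prp} to obtain closed forms $\omega_i$ Poincar\'e dual to $C$ with $\omega_i+K_i\phi_i$ semipositive and strictly positive on an open $U_i\supset C$. Choose $\delta$ so that the $\delta$-neighborhood of $C$ lies in $\bigcap_iU_i$. If $S$ meets this neighborhood and $\langle[S],[C]\rangle\le0$, then for each $i$,
\[
0<\int_S(\omega_i+K_i\phi_i)=\langle[S],[C]\rangle+K_i\!\int_S\phi_i\le K_i\!\int_S\phi_i,
\]
so $S$ meets $\supp(\phi_i)\subset B_i$; since the $B_i$ cover $C$, every point of $C$ is $\varepsilon$-close to~$S$. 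No contradiction argument, no sequences, and no modification of~$S$ is needed: the strict positivity of $\omega_i+K_i\phi_i$ on a \emph{definite neighborhood} of~$C$ is exactly the quantitative input that replaces your missing bridge.
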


Only the equality case in Theorem~\ref{stable_pos_int_thm} is new, because of the classical positivity of intersections. Broadly, Theorem~\ref{stable_pos_int_thm} says that curves which almost intersect must almost coincide on an irreducible component. We thank the anonymous reviewer for stating the next corollary and the explanatory remark which follows it.

\begin{crl}\label{Hausdorff_crl}
Let $(X,J)$ be a closed almost complex 4-manifold, equipped with a metric. Let $C,S \subset X$ be closed pseudoholomorphic curves so that $C$ is irreducible and
$\langle[S], [C] \rangle > 0.$
If $C \not\subset S$, then there exists $\delta > 0$ such that every closed pseudoholomorphic curve $S'$ which is $\delta$-close to $S$ in the Hausdorff metric satisfies 
$\langle[S'], [C] \rangle > 0.$
\end{crl}

Corollary~\ref{Hausdorff_crl} is more difficult to prove than it might first appear. One is tempted to take a limit, but a Hausdorff limit does not generally preserve homology class, and a limit as currents without a bound on area will generally result in a (singular) complex cycle, not a curve.\\

We now briefly remark on the proof. If a pseudoholomorphic curve varies in a large family, then integration along the fiber allows one to define a well-behaved Poincaré dual 2-form, which would imply Theorems \ref{Curves2Symp_thm}, \ref{PosInter_thm}, and \ref{stable_pos_int_thm} via Proposition~\ref{taming_prp}. However, a pseudoholomorphic curve might not vary in a family of pseudoholomorphic curves. We resolve this issue by allowing the curves in the family to fail to be pseudoholomorphic on a small prescribed disk. This allows every pseudoholomorphic curve to vary in an arbitrarily large family, while retaining a great deal of control over the resulting Poincaré dual form. This argument is the key step in the proof of Proposition~\ref{PD_prp}, which quickly implies all of the main results of this paper and should be of independent interest. In fact, one can prove Theorem~\ref{Curves2Symp_thm} directly from Proposition~\ref{PD_prp} by adding together a suitable collection of Poincaré dual forms to yield a symplectic form; this method avoids the use of complex cycles.\\

The outline of the paper is as follows. In Section \ref{remarks_sec}, we discuss possible extensions of the main results of this paper, as well as their connection to other work. In Section \ref{prelim_sec}, we recall some basics from the theory of currents. In Section \ref{deformation_sec}, we recall the deformation theory of pseudoholomorphic curves and state Proposition \ref{CurveMS_prp}, which describes the flexibility of pseudoholomorphic curves with boundary. It is proved in the appendix. In Section \ref{PD_sec}, we prove Proposition \ref{PD_prp}, which constructs a Poincaré dual to a pseudoholomorphic curve. Theorems~\ref{Curves2Symp_thm}, \ref{PosInter_thm}, and \ref{stable_pos_int_thm} follow quickly from Proposition~\ref{PD_prp} and are proved in Section \ref{main_proofs_sec}.\\

\textbf{Acknowledgements.} The author is thankful to Eric Bedford, Dennis Sullivan, Dror Varolin, and Scott Wilson for helpful conversations, to Ethan Addison for his comments on an earlier draft, and to Aleksey Zinger for his great help with the exposition, especially in the appendix. The author is also grateful to the anonymous reviewer for a thorough report that significantly improved the clarity of writing, particularly of the introduction and Section~\ref{PD_sec}.

\section{Discussion of results}\label{remarks_sec}

In this section, we explain some features of the main results and their proofs, pose Conjecture~\ref{cone_cnj} generalizing Theorem~\ref{Curves2Symp_thm}, and indicate connections to related work.
\subsection{Sharpness}
Theorem~\ref{Curves2Symp_thm} is not sharp, as there are many tamed almost complex manifolds which admit no pseudoholomorphic curves at all (e.g. a general complex torus). However, Example~\ref{hopf_eg} and Propositions~\ref{untameble_prp} and \ref{lefschetz_prp} below show that the assumptions cannot be significantly weakened. First of all, Theorem~\ref{Curves2Symp_thm} would not hold if $X$ were assumed only to have a pseudoholomorphic curve passing through each point, as demonstrated by the following example.

\begin{eg}\label{hopf_eg}
Let $S \bydefinition (\C^2 - \{0\}) / \{z \sim 2z\}$ be the Hopf surface. It has a holomorphic curve through every point and satisfies \ref{fc_it} in Theorem \ref{Curves2Symp_thm}, but $H^2(S,\Z) = 0$, so $S$ is not tamed.
\end{eg}

One might hope that there is a topological condition on $X$ or a geometric condition concerning just one pseudoholomorphic curve (e.g. that it be sufficiently free) that would guarantee that $(X,J)$ is tamed. By Proposition \ref{untameble_prp} below, there is no such condition.

\begin{prp}\label{untameble_prp}
Let $(X,J)$ be an almost complex manifold of dimension at least 4. For every nonempty open $U \subset X$, there exists an almost complex structure $J'$ on $X$ which is deformation equivalent to $J$, is equal to $J$ outside of $U$, and admits no taming symplectic structure.
\end{prp}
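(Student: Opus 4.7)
The plan is to modify $J$ inside a small coordinate ball $B$ with $\overline{B} \subset U$ so as to produce a pseudoholomorphic 2-sphere contained in $B$. Any such sphere $\Sigma$ is null-homologous in $X$, so for every closed 2-form $\omega$ we have $\int_\Sigma \omega = \langle [\omega], [\Sigma]\rangle = 0$, whereas tameness of $\omega$ with respect to $J'$ would force $\int_\Sigma \omega > 0$. Thus the existence of a closed pseudoholomorphic curve inside a coordinate ball is, on its own, an obstruction to tameability, and it suffices to construct such a $J'$ equal to $J$ outside $B$ and deformation equivalent to $J$.

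To build $J'$, fix a smoothly embedded sphere $\Sigma \subset B$ with trivial normal bundle and identify a tubular neighborhood $V \cong \Sigma \times D^{2n-2}$ of $\Sigma$. Define an almost complex structure $\tilde{J}$ on $V$ as the product of a complex structure on $\Sigma \cong \mathbb{CP}^1$ with a constant complex structure on $D^{2n-2}$, so that $\Sigma \times \{0\}$ is $\tilde{J}$-pseudoholomorphic. Let $W \subset V$ be a smaller tubular neighborhood of $\Sigma$. I would extend $\tilde{J}|_W$ to an almost complex structure $J'$ on $X$ equal to $J$ outside $V$ by interpolating through almost complex structures on the collar $V \setminus W$. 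This is an extension problem for sections of the bundle of orientation-compatible almost complex structures on $TX$, whose fibers are diffeomorphic to $\mathrm{SO}(2n)/\mathrm{U}(n)$; the extension exists after, if necessary, pre-composing $\tilde{J}$ with a loop of $\mathrm{U}(n)$-frames on the normal bundle that alters its homotopy class on $\partial V$ without disturbing the pseudoholomorphicity of $\Sigma$.

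Finally one must check deformation equivalence of $J'$ with $J$. The set of homotopy classes of almost complex structures on $X$ agreeing with $J$ outside $B$ is enumerated by $[B/\partial B, \mathrm{SO}(2n)/\mathrm{U}(n)] = \pi_{2n}(\mathrm{SO}(2n)/\mathrm{U}(n))$, a finite group (equal to $\pi_4(S^2) = \Z/2$ when $\dim X = 4$). If $J'$ and $J$ lie in different components, we perform one further local modification in a small ball inside $B$ disjoint from $V$, replacing $J$ there with a representative of the inverse class; this keeps $\Sigma$ pseudoholomorphic, keeps $J'$ equal to $J$ outside $B$, and places the corrected $J'$ in the component of $J$.

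The principal obstacle in this proof is the obstruction-theoretic bookkeeping described above: both the global extension of $\tilde{J}$ and the deformation-equivalence correction are governed by the same homotopy theory of $\mathrm{SO}(2n)/\mathrm{U}(n)$, and one must arrange that the local twists used to fix one do not spoil the other. The underlying geometric idea, however, is simple and sharp: a pseudoholomorphic sphere trapped inside a coordinate ball is automatically null-homologous, which is incompatible with tameness.
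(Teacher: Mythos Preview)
Your approach has a genuine obstruction in dimension~4: no almost complex structure on a 4-ball can make an embedded 2-sphere pseudoholomorphic. Suppose $J'$ is any almost complex structure on $B^4$ and $\Sigma\cong S^2\subset B^4$ is $J'$-holomorphic. Since $B^4$ is contractible, the complex bundle $(TB^4,J')$ is trivial, so its restriction to $\Sigma$ has $c_1=0$. On the other hand $(TB^4,J')|_\Sigma\cong T\Sigma\oplus N$ as complex bundles, so $c_1=c_1(T\Sigma)+c_1(N)=2+c_1(N)$. Now $c_1(N)=e(N)$ is the normal Euler number, which for any closed oriented surface embedded in $B^4$ equals the homological self-intersection and hence vanishes. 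Thus $0=2$, a contradiction. No twist preserving the pseudoholomorphicity of $\Sigma$ can repair this: in dimension~4 the normal bundle is a real plane bundle, and every complex structure on the trivial real plane bundle over $S^2$ yields the trivial complex line bundle (the space of such structures is contractible). The obstruction you locate in $\pi_{2n}(SO(2n)/U(n))$ is secondary; the primary obstruction is this integer, and it cannot be killed.

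This is precisely why the paper uses a torus rather than a sphere. The same computation shows that a closed genus-$g$ surface embedded in $B^4$ can be pseudoholomorphic only if $2-2g=0$, forcing $g=1$. The paper realizes this by embedding $T^2\subset\R^3\subset\R^4$ and observing, via Gauss--Bonnet, that the Gauss map $\nu\!:T^2\to S^2$ has degree $\chi(T^2)/2=0$ and is therefore null-homotopic. Extending $\nu$ over the ball to be constant near the boundary produces a plane field, and hence an almost complex structure, that agrees with the standard one outside the ball and is homotopic to it rel boundary; both the extension and the deformation equivalence come for free, with no obstruction-theoretic bookkeeping required. In dimensions $\ge 6$ your idea might be salvageable by putting a non-constant complex structure on the normal disk so that $c_1(N)=-2$, but your stated product construction fails there for the same reason, and the ``loop of $U(n)$-frames'' you invoke does not address this $c_1$.
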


\begin{proof}
First, we construct an almost complex structure $J'$ on $\mathbb{C}^n$ for $n \geq 2$ so that $J'$ equals the standard complex structure outside the unit ball and $\mathbb{C}^n$ contains a $J'$-holomorphic embedded torus.
Let $\C^n$ have complex coordinates $x_1 + iy_1, \dots, x_n + iy_n$ and $\R^3$ have real coordinates $x_1, y_1$, and $x_2$. Embed the 2-torus $T^2$ in the unit ball in $\R^3$. By the Gauss-Bonnet Theorem, the Gauss map $\nu: T^2 \lra S^2$, defined by taking the unit normal vector, has degree zero (see \cite[Lemma~6.3]{Milnor}) and is thus null-homotopic. Therefore, one can extend the map $\nu$ to the rest of the unit ball in $\C^n$, so that it remains in the span of $\frac{\partial}{\partial x_1}$, $\frac{\partial}{\partial y_1}$, and $\frac{\partial}{\partial x_2}$ and restricts to the constant vector field $\frac{\partial}{\partial x_2}$ on the unit sphere. Extend it to the rest of $\C^n$ as a constant vector field. Let $v$ be the resulting vector field and define a field of oriented planes $\pi$ on $\C^n$ by $v \wedge \frac{\partial}{\partial y_2}$. Define an almost complex structure~$J'$ on $\C^n$ via a rotation by a positive quarter turn in $\pi$, its orthogonal complement in the span of $\frac{\partial}{\partial x_1}$, $\frac{\partial}{\partial y_1}$, $\frac{\partial}{\partial x_2}$, and $\frac{\partial}{\partial y_2}$, and in each plane spanned by $\frac{\partial}{\partial x_k}$ and $\frac{\partial}{\partial y_k}$ for $2 < k \leq n$. This $J'$ equals the standard complex structure outside the unit ball and contains a $J'$-holomorphic embedded torus.\\

Now, we modify $(X,J)$ using this local model. Deform $J$ on a nonempty open subset $U' \subset U$ so that the resulting structure is integrable on $U'$. Replace $J$ with the $J'$ constructed above inside a complex coordinate chart. As $J'$ is standard outside of the unit ball, this yields an almost complex structure~$J'$ on $X$. The manifold $X$ contains a null-homologous nonconstant $J'$-holomorphic curve, so $J'$ is not tamed by any symplectic structure.
\end{proof}

As noted above, there are almost complex 4-manifolds which satisfy the conclusion, but not the assumptions of Theorem~\ref{Curves2Symp_thm}. However, Proposition~\ref{lefschetz_prp} below shows that every almost complex 4-manifold satisfying the conclusion of Theorem \ref{Curves2Symp_thm} can be transformed into one which satisfies the assumptions of Theorem \ref{Curves2Symp_thm}. In particular, it shows that Theorem \ref{Curves2Symp_thm} is sharp up to blowups and deformations.

\begin{prp}\label{lefschetz_prp}
Let $(X,J)$ be a closed tamed almost complex 4-manifold. There are a taming symplectic form $\omega$ on $X$, a symplectic manifold $(X',\omega')$ obtained by symplectically blowing up $(X,\omega)$ at a finite number of points, and an almost complex structure $J'$ on $X'$ tamed by $\omega'$ such that every pair of points of $X'$ can be joined by a closed connected $J'$-holomorphic curve~$C$ and $X'$ contains a free $J'$-holomorphic curve.
\end{prp}

\begin{proof}
Taming $J$ is an open and positive scaling invariant condition on symplectic forms. Therefore, if $(X,J)$ is tamed, it admits a taming symplectic form $\omega$ such that $[\omega] \in H^2(X,\R)$ is the reduction of an integer cohomology class. Therefore, by \cite[Theorem 2]{DonLefschetz}, $(X,\omega)$ admits a Lefschetz pencil such that the fibers are Poincaré dual to a multiple of $[\omega]$. Since $\omega$ has positive self-intersection, the base locus is nonempty. It follows from the definition of Lefschetz pencil that the base locus is finite and the fibers intersect transversely. Symplectically blowing up along the base locus of the pencil yields a symplectic manifold $(X',\omega')$ which admits a Lefschetz fibration; see \cite[Chapter~3]{Wendl} for a full exposition of this construction.\\

An exceptional divisor of this blowup is an embedded symplectic sphere with self-intersection $-1$. There is an almost complex structure on $X'$ tamed by $\omega'$ in which the exceptional divisor is a pseudoholomorphic curve. As it is a rational curve with self-intersection $-1$, it is regular by \cite[Theorem~1]{HLS}. Therefore, by Gromov's Compactness Theorem \cite[Theorem~1.5.B]{Gr}, every almost complex structure on $X'$ which is tamed by $\omega'$ has a pseudoholomorphic curve in the class of the exceptional divisor. There is an almost complex structure $J'$ tamed by $\omega'$ on $X'$ such that the fibers of the Lefschetz fibration are pseudoholomorphic curves; see the remark following Lemma 2.12 in \cite{DonSmith}. The smooth fibers are free. Furthermore, there is an exceptional divisor intersecting each fiber, so every pair of points of $X'$ can be joined by a closed connected $J'$-holomorphic curve.
\end{proof}

\subsection{Future directions}\label{future_subsection}

Our proof of Theorem \ref{Curves2Symp_thm} via the positivity of intersections is quite robust and many related statements can be proved in a similar manner. The main difficulty is in obtaining a closed semipositive form with which one can use Theorem \ref{PosInter_thm}. In our case, we use free curves to produce such forms. However, they can arise in other ways, for example via pullbacks and limits of symplectic forms. Proposition~\ref{semipositive_cone_prp} below is a somewhat sharper version of Theorem~\ref{Curves2Symp_thm}, albeit with a more technical statement.

\begin{prp}\label{semipositive_cone_prp}
Let $(X,J)$ be a closed almost complex 4-manifold. Suppose there exists a closed semipositive 2-form $\varphi$ such that, for every $x \in X$, there exist a closed connected pseudoholomorphic curve $C_x$ passing through $x$ such that $\int_{C_x} \varphi > 0$. Then, $(X,J)$ admits a taming symplectic structure.
\end{prp}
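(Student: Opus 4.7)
The plan is to reduce to Sullivan's criterion and then invoke the positivity-of-intersections results already proved. By Proposition \ref{taming_prp}, it suffices to verify that $[T] \neq 0 \in H_2(X,\R)$ for every nonzero complex cycle $T$ on $(X,J)$. Fix such a $T$. Because $\phi$ is closed, $T(\phi) = \langle [\phi],[T]\rangle$ depends only on $[T]$, so if $T(\phi) > 0$ we are finished. Assume therefore that $T(\phi) = 0$; in this case the aim is to produce an irreducible closed pseudoholomorphic curve $C$ with $\langle [T],[C]\rangle > 0$.

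First I would pick a point $x \in \supp(T)$, which is nonempty since $T \neq 0$, and invoke the hypothesis to obtain a closed connected pseudoholomorphic curve $\widetilde{C}$ through $x$ with $\int_{\widetilde{C}}\phi > 0$. Decompose $\widetilde{C}$ into its irreducible components $C_1,\dots,C_k$. Since $\phi$ is semipositive and the tangent planes of each $C_i$ are $J$-invariant, $\phi|_{C_i}$ is pointwise nonnegative on the smooth part of $C_i$, so $\int_{C_i}\phi \geq 0$, and at least one component $C^*$ satisfies $\int_{C^*}\phi > 0$.

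The key claim is that some $C_i$ meets $\supp(T)$ and either (i) is not contained in $\supp(T)$, or (ii) satisfies $\int_{C_i}\phi > 0$. If such a $C_i$ exists then Corollary \ref{pos_int_crl} (in case (i)) or Theorem \ref{PosInter_thm} applied with the given $\phi$ (in case (ii)) yields $\langle [T],[C_i]\rangle > 0$, proving $[T]\neq 0$. To verify the claim I would walk along a chain of components $C_{i_0},C_{i_1},\dots,C_{i_m}=C^*$ with $x\in C_{i_0}$ and $C_{i_t}\cap C_{i_{t+1}}\neq \emptyset$ for each $t$, whose existence is guaranteed by the connectedness of the dual intersection graph of $\widetilde{C}$. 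The property $C_{i_t}\cap \supp(T)\neq \emptyset$ propagates along the chain: it starts from $x \in C_{i_0}\cap \supp(T)$, and whenever $C_{i_t}\subset \supp(T)$ the point $C_{i_t}\cap C_{i_{t+1}}$ lies in $\supp(T)$. Stop at the first $t$ for which $C_{i_t}$ triggers (i) or (ii). Since $C_{i_m}=C^*$ triggers (ii), the walk must halt, yielding the desired $C$.

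The main obstacle is precisely this chain-chasing step: one has to verify that the nonempty-intersection-with-$\supp(T)$ property propagates and that one of the two halting conditions is always reached before or at $C^*$. Once this is done, the proposition is an essentially immediate consequence of Theorem \ref{PosInter_thm}, Corollary \ref{pos_int_crl}, and Proposition \ref{taming_prp}.
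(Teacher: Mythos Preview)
Your proposal is correct and follows essentially the same strategy as the paper's proof: reduce to Sullivan's criterion (Proposition~\ref{taming_prp}), pick a connected curve through a point of $\supp(T)$, and walk along a chain of irreducible components until one of them either escapes $\supp(T)$ (invoking Corollary~\ref{pos_int_crl}) or has positive $\phi$-integral (invoking Theorem~\ref{PosInter_thm}). The only cosmetic difference is that the paper does not split off the case $T(\phi)>0$ at the outset; instead it folds this into the terminal step of the chain, noting that if the last component meets $\supp(T)$ then Theorem~\ref{PosInter_thm} forces either $T(\phi)\neq 0$ or $\langle[T],[C_k]\rangle>0$.
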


As in Proposition~\ref{lefschetz_prp}, the existence of a Lefschetz pencil shows that Proposition~\ref{semipositive_cone_prp} is sharp up to deformation; blowing up is not required in this case. It is quite difficult (perhaps impossible) to construct an almost complex 4-manifold with many curves which does not also have closed semipositive forms. Ideally, one could avoid the assumption of semipositivity altogether and use only a \textit{homological} positivity condition, as below. This means that the curves $C_x$ lie in an open cone in $H_2(X,\R)$.

\begin{cnj}\label{cone_cnj}
The word ``semipositive'' can be removed in Proposition \ref{semipositive_cone_prp}.
\end{cnj}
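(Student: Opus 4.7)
I would argue by contradiction. Suppose $(X,J)$ is not tamed; by Proposition \ref{taming_prp}, there exists a nonzero complex cycle $T$ with $[T]=0\in H_2(X,\R)$. Let $\alpha=[\phi]\in H^2(X,\R)$ be the class of the closed form witnessing the hypothesis, so $\alpha\cdot[C_x]>0$ for every $x\in X$.

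The easy step is to pick any $x\in\supp(T)$ and take the curve $C_x$ furnished by the hypothesis. Then $x\in C_x\cap\supp(T)$, so if additionally $C_x\not\subset\supp(T)$, Corollary \ref{pos_int_crl} yields $\langle[T],[C_x]\rangle>0$, contradicting $[T]=0$. This dispatches every configuration except the following \emph{bad case}: for every $x\in\supp(T)$, every closed connected $J$-holomorphic curve $C$ through $x$ with $\alpha\cdot[C]>0$ is entirely contained in $\supp(T)$.

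To attack the bad case, the natural plan is to upgrade the homological hypothesis to a genuine closed \emph{semipositive} form, reducing the conjecture to Proposition \ref{semipositive_cone_prp}. By Proposition \ref{PD_prp} and the Oka-principle construction outlined in the introduction, each curve $C$ admits a Poincar\'e dual 2-form that is semipositive outside a small, prescribable disk on which the underlying curve is perturbed away from $J$-holomorphicity. I would try to assemble such Poincar\'e duals into a closed semipositive form $\phi_0$ by integrating them against a suitable probability measure on a family of curves covering $X$, arranging the prescribed disks so that their union has negligible total mass. The resulting $\phi_0$ should satisfy $\int_{C_x}\phi_0>0$ for a curve $C_x$ through each $x\in X$, and Proposition \ref{semipositive_cone_prp} would then produce a taming symplectic structure.

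The main obstacle is justifying this assembly. Without a semipositive $\phi$ to start with, the hypothesis imposes no a priori bound on the areas or homology classes of the curves $\{C_x\}$, so Gromov compactness cannot be directly invoked to form a measurable selection or to integrate over a compactified moduli space. Circumventing this seems to require either exploiting the rigid geometry of $\supp(T)$ in the bad case---where $\supp(T)$ is swept out by pseudoholomorphic curves with positive $\alpha$-pairing and one might hope to extract a controlled subfamily---or a new dual-cone argument separating $[T]=0$ from the convex cone generated by $\{[C_x]\}\subset H_2(X,\R)$ in a way compatible with the $J$-invariant 2-vector structure of complex cycles. Making either approach work appears to require ideas beyond the techniques of the present paper, which is presumably why the statement remains conjectural.
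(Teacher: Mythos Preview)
The statement is labeled a \emph{conjecture} in the paper and has no proof there; there is nothing to compare your proposal against. What you have written is not a proof but a research outline, and you yourself acknowledge this in the final paragraph. That is the honest situation.

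A couple of remarks on the outline itself. Your ``easy step'' is slightly too quick: Corollary~\ref{pos_int_crl} requires the curve to be \emph{irreducible}, whereas $C_x$ is only assumed connected. The fix is the chain-of-components argument already used in the paper's proof of Proposition~\ref{semipositive_cone_prp}: walk along irreducible components $C_0,\dots,C_k$ from a point of $\supp(T)$ toward a component on which the hypothesis bites, and apply Corollary~\ref{pos_int_crl} to the first component that meets but is not contained in $\supp(T)$. This reduces the genuine difficulty to exactly the bad case you isolate, namely $C_x\subset\supp(T)$ for the relevant curves. In the paper's proof of Proposition~\ref{semipositive_cone_prp} this case is handled by Theorem~\ref{PosInter_thm}, whose hypothesis $T(\phi)=0$ is available precisely because $\phi$ is semipositive and $[T]=0$; without semipositivity that step collapses, which is the heart of why the conjecture is open.

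Your proposed attack on the bad case---manufacture a closed semipositive form by averaging Poincar\'e duals from Proposition~\ref{PD_prp} over a family of curves---is in the spirit of the paper, but the obstacle you name is real: Proposition~\ref{PD_prp} produces $\omega_C+K\phi$ semipositive only \emph{given} a semipositive $\phi$ with $\int_C\phi>0$, so it does not bootstrap from nothing, and without area bounds there is no obvious measure on curves to average against. Your diagnosis that new ideas are required is consistent with the paper leaving this as a conjecture.
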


Our method of using complex cycles allows one to leverage other geometric information to construct symplectic forms. For example, as complex cycles are normal currents, they are supported on a set of Hausdorff dimension at least 2; see \cite[Theorem 8.5]{FF}. It follows that sets of lower dimension can be ignored in Theorem \ref{Curves2Symp_thm}. It is not clear how one would see this other than through the perspective of complex cycles. Their geometry is also studied in \cite{Cattalani}. Ultimately, a full understanding of complex cycles would elucidate the role of symplectic structures in almost complex geometry.\\

In the classical proof of the positivity of intersections, the main step is deforming a pseudoholomorphic curve on a small neighborhood. Our proof of Theorem~\ref{PosInter_thm} refines this by deforming the curve everywhere \textit{except} on a small neighborhood. The fact that pseudoholomorphic curves can be deformed this way is a kind of semi-global flexibility. We expect that this point, which can be considered as a type of h-principle or Oka principle, should be of use in a variety of contexts.\\

We now explain a few different ways in which this semi-global flexibility can be understood. The deformations of a pseudoholomorphic curve are controlled by its deformation and obstruction bundles; see \cite[Section~2]{IvSh1}. In this paper, we arrange that the former is large and the latter is zero by choosing appropriate boundary conditions (Lemma~\ref{boundary_conditions_lmm}) and using an automatic transversality result (Proposition~\ref{CurveMS_prp}). This argument is restricted to dimension 4, due to the use of automatic transversality. Alternatively, we could add a family of perturbations to the Cauchy-Riemann equation that covers the obstruction bundle. Essentially by unique continuation, such a family can be chosen to be supported in an arbitrary open set; see \cite[Lemma~4.1]{Zinger}, for example. This technique works in any dimension to produce a family of surfaces which are pseudoholomorphic away from a small open set. There is also a complex-geometric interpretation. An open Riemann surface is Stein \cite[Theorem 8.1.1]{Varolin}. It should then follow from Cartan's Theorem~B \cite[Théorème B]{Serre} that the obstruction bundle vanishes. Therefore, a holomorphic curve with boundary in a complex manifold should deform in an arbitrarily large family.\\

We now briefly remark on connections to other work. The method for constructing compatible symplectic structures in \cite{Taubes} is quite similar to the integral-geometric method proposed in \cite[Remark~2.4.$\textnormal{A}'$]{Gr}. By \cite{TaubesCurrents}, this method can be used to show that almost complex 4-manifolds with sufficiently well-distributed curves admit \textit{compatible} symplectic structures, a stronger condition than just taming. By deforming curves, we avoid having to assume that they are well-distributed (formally, that they can be used to form a \textit{Taubes current}), but we do not produce a compatible symplectic structure. We hope that our method, enriched with analytic estimates as in \cite{Taubes}, could help produce compatible forms, and therefore make progress towards Donaldson's ``tamed-to-compatible'' conjecture \cite{DonConj}. With respect to Theorem \ref{PosInter_thm}, one might wish to understand intersections between two complex cycles. This is akin to the intersection theory of positive $(1,1)$-currents in complex geometry, which is well-developed; see \cite{Bedford, Demailly}, for example. Pluripotential theory is central to this development. Unfortunately, it seems that complex cycles do not always admit potential functions, which significantly hampers the use of pluripotential theory in our context. 

\section{Preliminaries: currents}\label{prelim_sec}

In this section, we recall the necessary background on currents. This plays an essential role in the proof of Proposition~\ref{PD_prp} in Section~\ref{PD_sec}, which constructs a differential form with controlled geometry. We do this in two parts - on a large ``good'' set and a small ``bad'' set. The construction of the form in the good set uses integration along the fiber, which is the focus of Section~\ref{integration_subsection}. The construction on the bad set uses Sullivan's theory of structure currents, which is the focus of Section~\ref{structure_cycles_subsection}.\\

Let $X$ be a smooth $n$-manifold. For $k \in \mathbb{Z}^{\geq 0}$, we denote by $\Omega_c^k(X)$ the space of compactly supported differential $k$-forms on $X$. A \textit{k-current} on an $n$-manifold $X$ is a linear functional on $\Omega^{k}_{c}(X)$ which is continuous in the strong $C^\infty$-topology on $\Omega^{k}_{c}(X)$. A sequence of forms converges in the strong $C^\infty$-topology if it converges in $C^\infty$ and all the forms are supported in the same compact set. We let $(\Omega^{k}_{c}(X))^*$ denote the space of $k$-currents on $X$. The exterior derivative on forms induces a boundary operator,
$$\partial:(\Omega^{k}_{c}(X))^* \lra (\Omega^{k-1}_{c}(X))^*, \quad \partial T(\alpha) \bydefinition T(\d\alpha).$$
A current $T$ on $X$ is called a \textit{cycle} if $\partial T=0$. The \textit{support} of a $k$-current $T$ on $X$, denoted $\supp(T)$, is the complement of the largest open subset $U \subset X$ such that $T(\alpha) = 0$ for all $\alpha \in \Omega^{k}_{c}(U)$.
If $\alpha \in \Omega^{k}(X)$ with $\supp(\alpha) \cap \supp(T)$ compact, then
$$T(\alpha) \bydefinition T(\rho \alpha) \in \R$$
does not depend on the choice of $\rho \!\in\! C^\infty_c(X)$ such that $\rho$ is $1$ on a neighborhood of $\supp(\alpha) \cap \supp(T)$. If $T$ is a $k$-current on $X$ and $U \subset X$ is an open subset, then the restriction of $T$ to $\Omega^{k}_{c}(U)\subset\Omega^{k}_{c}(X)$ is a $k$-current on $U$, which we denote by $T|_U$.
\begin{eg}\label{submanifold_current_eg}
A dimension $k$ closed oriented submanifold $Y$ of $X$ defines a $k$-current 
$$\curindby{Y} :  \Omega_c^{k}(X) \lra \R, \quad \curindby{Y}(\alpha) := \int_Y \alpha.$$
In this case, $\supp(\curindby{Y}) = Y$.
\end{eg}

\begin{eg}
If $X$ is oriented, a degree $(n-k)$ differential form $\omega$ defines a $k$-current
$$\curindby{\omega} :  \Omega_c^{k}(X) \lra \R, \quad \curindby{\omega}(\alpha) := \int_X \omega\wedge\alpha.$$
In this case, $\supp(\curindby{\omega}) = \supp(\omega)$. If $X$ is closed, the inclusion
$$\iota: \Omega^k(X) \lra (\Omega^{n-k}_c(X))^*$$
induces an isomorphism from de Rham cohomology to the homology of currents; see~\cite[Chapter~IV, Theorem~14]{deRham}. The analogous statement in relative homology also holds.
\end{eg}

\begin{eg}
A $k$-multivector $v\!\in\bigwedge^{\!k}TX$ defines a $k$-current
$$\curindby{v}:  \Omega_c^{k}(X) \lra \R, \quad \curindby{v}(\alpha) := \alpha(v).$$
The current $\curindby{v}$ is called a \textit{Dirac current}.
\end{eg}

A current $T$ on $X$ is said to be \textit{smooth} on an open set $U \subset X$ if $T|_U = \curindby{\omega}$ for some differential form $\omega$ on $U$. We note that the current defined by a smooth submanifold of positive codimension, as in Example~\ref{submanifold_current_eg}, is not smooth on any open subset intersecting $Y$.\\

The pullback operation on forms induces a pushforward on currents with compact support. For a smooth map $f: M \lra X$ and a compactly supported $k$-current $T$ on $M$,
\begin{equation}\label{pushforward_eqn}
f_*T: \Omega_c^{k}(X) \lra \R, \quad \{f_*T\}(\alpha) := T(f^*\alpha), 
\end{equation}
is a well-defined $k$-current on $X$.

\subsection{Integration along the fiber}\label{integration_subsection}
In special cases, the pushforward of a smooth current is smooth. Such cases include \textit{integration along the fiber}; see \cite[Section 6]{BottTu} and the proof of Lemma \ref{submersion_pushforward_lmm} below. 

\begin{lmm}\label{submersion_pushforward_lmm}
Let $f\!: M\! \lra \!X$ be a smooth map from an oriented $m$-manifold $M$ with boundary to an oriented $n$-manifold $X$, $k \!\in\! \mathbb{Z}^{\geq 0}$, and $\omega \!\in\! \Omega_c^{m-k}(M)$. If $x \!\in\! X\!-\!f(\partial M)$ is a regular value of $f$, $v \!\in\! \bigwedge^{\!n-k}(T_xX)$, $\widetilde{v} \!\in \!\Gamma(f^{-1}(x); \bigwedge^{\!n-k}TM)$ is a lift of $v$ along $\d f$, and $\omega(-,\wt{v}) \!\in\! \Gamma(f^{-1}(x),\bigwedge^{\!m-n}T^*M)$ is the contraction of $\omega$ with respect to $\wt{v}$ in the last $n\!-\!k$ inputs, then
$$\{f_*\omega\}_x(v) := \int_{f^{-1}(x)}\omega(-,\wt{v})$$
is independent of the choice of the lift of $v$.
If $W \!\subset\! X\!-\!f(\partial M)$ is an open subset consisting of regular values of $f$, then the map
$$\{f_*\omega\}|_W: W \lra \bigwedge\nolimits^{\!n-k}(T^*W), \quad x \mapsto \{f_*\omega\}_x \,,$$
defines a (smooth) differential form on $W$ and
\begin{equation}\label{pushforward_acting_on_vector_eqn}
\{f_*\curindby{\omega}\}|_W = \curindby{\{f_*\omega\}|_W}.
\end{equation}
In particular, the $k$-current $f_*\curindby{\omega}$ is smooth on $W$.
\end{lmm}

\begin{proof}
We assume $m \geq n$ and show first that this result holds for the projection onto the last $n$ coordinates, \hbox{$\pi: \R^m \lra \R^n$}. Denote the first $m-n$ coordinates of $\R^m$ by $p_1, \dots, p_{m-n}$ and the last $n$ coordinates by $x_1, \dots, x_n$. It suffices to prove the result for simple forms, i.e. those of the form $f\d p_\alpha\! \wedge\!\d x_\beta$, where $\alpha$ and $\beta$ are multi-indices. Let $g\d x_\gamma$ be a form on $\R^n$, where $\gamma$ is a multi-index and $g \!\in\!C^\infty_c(\R^n)$. By Fubini's theorem,
\begin{equation*}
\begin{split}
\{\pi_*(\curindby{f\d p_\alpha \wedge \d x_\beta})\}(g\d x_\gamma) &\bydefinition \int_{\R^m} f\d p_\alpha \wedge \d x_\beta \wedge\pi^*(g\d x_\gamma) = \int_{\R^m} f\d p_\alpha \wedge\d x_\beta \wedge (g \circ \pi)\d x_\gamma\\
&= \int_{\R^n} \bigg(\int_{\R^{m-n}} f\d p_\alpha\bigg) \d x_\beta\wedge  (g \d x_\gamma).
\end{split}
\end{equation*}
Thus, $\pi_*(\curindby{f\d p_\alpha \d x_\beta})$ is equal to the current induced by the smooth form
\begin{displaymath}
\bigg(\int_{\R^{m-n}} f\d p_\alpha\bigg) \d x_\beta.
\end{displaymath}
Expressed in this way, it is also clear that it acts on multivectors as described in (\ref{pushforward_acting_on_vector_eqn}).\\

Now, we show the result in the general case. For $x \!\in\! W \!\subset \!X\! - \!f(\partial M)$, let $\rho$ be a nonnegative function on~$X$ supported within $W$ and equal to $1$ on a neighborhood $W'$ of $x$. Let $\omega' \bydefinition (\rho \circ f)\omega$. This form is compactly supported and
\begin{equation}\label{new_form_pushforward_eqns}
\{f_*\omega' \}|_{W'} = \{f_*\omega \}|_{W'}\,, \quad \{f_*\curindby{\omega'} \}|_{W'} = \{f_*\curindby{\omega} \}|_{W'}\,. 
\end{equation}
As $f$ is a submersion on $f^{-1}(W)$, for each $p \in f^{-1}(W) \subset M$, there are open neighborhoods $V_p \subset M$ of $p$ and $W_p \subset W$ of $f(p)$ and diffeomorphisms
$$\varphi: \R^m \lra V_p, \quad \psi: W_p \lra \R^n \quad \textnormal{such that} \quad \psi \circ f \circ \varphi = \pi: \R^m \lra \R^n$$ is the projection onto the last $n$ coordinates. By compactness, finitely many $V_p$ cover $\supp(\omega')$. Using a partition of unity, decompose $\omega'$ as a sum of forms $\omega'_p$ each supported in one of these $V_p$.
Therefore,
$$\{f_*\omega' \}|_{W'} = \sum_p \{f_*\omega_p' \}|_{W'},~~ \{f_*\curindby{\omega'}\}|_{W'} = \sum_p \{f_*\curindby{\omega_p'} \}|_{W'},~~ \textnormal{and}~~ \{f_*\omega_p' \}|_{W'} = \curindby{\{f_*\omega_p' \}}|_{W'}~\forall p;$$
the last statement follows from the previous paragraph. Along with (\ref{new_form_pushforward_eqns}), this implies the claimed equality~\hbox{$\{f_*\curindby{\omega}\}|_W = \curindby{\{f_*\omega\}|_W}$.}
\end{proof}

Integration along the fiber is our main method of constructing differential forms. We will apply it in the following general situation, which covers Example~\ref{group_eg}, Lemma~\ref{free_curve_PD_lmm}, and Proposition~\ref{PD_prp}. We have a manifold $X$, a moduli space $M_0$ of unmarked parameterized submanifolds in $X$ (in our setting, pseudoholomorphic curves), a moduli space $M_1$ of marked submanifolds in $X$, and the maps
$$\mathfrak{f}: M_1 \lra M_0 \quad \textnormal{and} \quad \ev: M_1 \lra X$$
which forget the marked point and evaluate at the marked point, respectively. The map $\ev|_{\f^{-1}(p)}$ is then the parameterization of the submanifold corresponding to $p \in M_0$. If $\dvol$ is a volume form on $M_0$, then we think of the current $\ev_*T_{\f^*\dvol}$ on $X$ as being given by the integral over $\dvol$ of the currents of integration of the submanifolds parameterized by $M_0$. The following lemma justifies this intuition. 

\begin{lmm}\label{integrated_family_lmm}
Let $X$ and $M_0$ be oriented manifolds and $\dvol$ be a compactly supported form of top degree on $M_0$. Let $M_1$ be an oriented manifold with boundary and
$$\f: M_1 \lra M_0 \quad \textnormal{and} \quad \ev: M \lra X$$
be a proper submersion and a smooth map, respectively. For every form $\alpha$ of degree $\dim M_1 -\dim M_0$ compactly supported in $X - \ev(\partial M_1)$,
\begin{equation}\label{integrated_family_eqn}
\{\ev_*T_{\f^*\dvol}\}(\alpha)
= (-1)^{(\dim M_0)(\dim M_1 - \dim M_0)}\int_{p \in M_0}\bigg(\int_{\f^{-1}(p)}\ev^*\alpha \bigg) \dvol.
\end{equation}
\end{lmm}

\begin{proof}
By definition,
$$\{\ev_*T_{\f^*\dvol}\}(\alpha)
= \int_{M_1} \f^*\dvol \wedge \ev^*\alpha = (-1)^{(\dim M_0)(\dim M_1 - \dim M_0)}\int_{M_1} \ev^*\alpha \wedge \f^*\dvol.$$
Integration along the fibers of $\f$ (or Fubini's theorem) yields
$$\int_{M_1}  \ev^*\alpha\wedge \f^*\dvol = \int_{p \in M_0}\bigg(\int_{\f^{-1}(p)}\ev^*\alpha \bigg) \dvol.$$
This equality does not follow directly from (\ref{pushforward_acting_on_vector_eqn}) because $\ev^*\alpha$ may not be compactly supported, so the pushforward of $\curindby{\ev^*\alpha}$ is not a priori well-defined. However, the form $\ev^*\alpha \wedge \f^*\dvol$ is compactly supported in the interior of $M_1$ and the proof of Lemma~\ref{submersion_pushforward_lmm} applies to this case without change.
\end{proof}

Equation (\ref{integrated_family_eqn}) gives geometric information about $\ev_*T_{f^*\dvol}$ as a current. By Lemma~\ref{submersion_pushforward_lmm}, $\ev_*T_{f^*\dvol}$ is smooth on the interior of $X\!-\!\ev(\partial M_1)$ if $\ev$ is a submersion. We would like to use Equation~(\ref{integrated_family_eqn}) to understand the associated differential form. This is the purpose of the following two lemmas, both of which follow from basic linear algebra.

\begin{lmm}\label{invariant_dual_lmm}
Let $(X,J)$ be an almost complex $4$-manifold, $\omega$ be a 2-form on $X$, and $T_\omega$ be the associated current. If $\curindby{\omega}(\alpha) = 0$ for every compactly supported $J$-anti-invariant 2-form $\alpha$, then $\omega$ is $J$-invariant.
\end{lmm}

\begin{proof}
If $\alpha$ is a $J$-anti-invariant 2-form and $\beta$ is a $J$-invariant 2-form, then $\alpha \wedge \beta = 0$. For a 2-form~$\omega$, define
$$\omega_+ = \frac{\omega(-,-) + \omega(J-,J-)}{2} \quad \textnormal{and} \quad \omega_- = \frac{\omega(-,-) - \omega(J-,J-)}{2}.$$
It is clear that $\omega_+$ is $J$-invariant, $\omega_-$ is $J$-anti-invariant, and $\omega = \omega_+ + \omega_-$. If $\omega$ is not $J$-invariant, then $\omega_- \neq 0$. Therefore, there exists a 2-form $\alpha$ such that $\omega_- \wedge \alpha \neq 0$. By the above,
$$\omega \wedge \alpha_- = \omega_- \wedge \alpha_- = \omega_- \wedge \alpha \neq 0.$$
Therefore, after multiplying by a suitable bump function $\rho$, $\rho \alpha_-$ is a compactly supported $J$-anti-invariant 2-form such that $\curindby{\omega}(\rho \alpha_-) \neq 0$.
\end{proof}

\begin{lmm}\label{semipositive_dual_lmm}
Let $(X,J)$ be an almost complex $4$-manifold with positive volume form $\dvol$, $\omega$ be a 2-form on $X$, and $T_\omega$ be the associated current.
If
$$\curindby{\omega}\big(\dvol(\xi\wedge J\xi,-)\big) \geq 0 \quad\forall \xi \in \Gamma_c(X,TX),$$
then $\omega$ is semipositive. In particular, if $T_\omega(\alpha) \geq 0$ for every semipositive $\alpha$, then $\omega$ is semipositive.
\end{lmm}

\begin{proof}
We prove the contrapositive. Suppose there exists a vector $v$ such that $\omega(v,Jv) < 0$. Extend it to a compactly supported vector field $\xi$ on $X$ such that $\omega(\xi(x), J\xi(x)) \leq 0$ for every $x \in X$. Thus,
$$\curindby{\omega}\big(\dvol(\xi(x) \wedge J\xi(x),-)\big)=\int_X \omega \wedge \dvol(\xi(x)\wedge J\xi(x),-) = \int_X\omega(\xi(x),J\xi(x))\dvol < 0,$$
as claimed.
\end{proof}

Below, we prove a special case of Theorem \ref{Curves2Symp_thm} to demonstrate the use of Lemmas \ref{submersion_pushforward_lmm}-\ref{semipositive_dual_lmm}.

\begin{eg}\label{group_eg}
Let $(X,J)$ be a closed almost complex 4-manifold and $G$ be a compact connected Lie group which acts transitively on~$X$ preserving $J$. Let $u: \Sigma \lra X$ be a pseudoholomorphic map from a closed connected Riemann surface $\Sigma$. If $u_*[\Sigma]$ has positive self-intersection number, then $(X,J)$ is tamed by a symplectic form $\omega$ which is $G$-invariant, $J$-invariant, and Poincaré dual to $u_*[\Sigma]$.
\end{eg}

\begin{proof}
Define
\begin{alignat*}{2}
\ff\!:G \times \Sigma&\lra G, &\qquad
\ff(g,z)&=g,\\
\ev\!:G \times \Sigma&\lra X, &\qquad
\ev(g,z)&=g(u(z)).
\end{alignat*}
Let $\d\mu$ be an invariant volume form on $G$ with $\int_G \d\mu = 1$ (i.e. the Haar measure). As $G$ acts transitively on~$X$, $\ev$ is a submersion. By Lemma \ref{submersion_pushforward_lmm}, $\omega \bydefinition \ev_*\ff^*(\d\mu)$ is a well-defined 2-form on~$X$. Let $\alpha \in \Omega^2(X)$.
By~(\ref{pushforward_acting_on_vector_eqn}) and (\ref{integrated_family_eqn}),
\begin{equation}\label{group_eg_eqn}
T_\omega(\alpha) = \int_{g \in G} \bigg(\int_\Sigma (gu)^*\alpha\bigg) \d\mu.
\end{equation}
By (\ref{group_eg_eqn}) and the $G$-invariance of $\d\mu$, $\omega$ is $G$-invariant. By (\ref{group_eg_eqn}) and the pseudoholomorphicity of~$gu$, $T_\omega(\alpha) =0$ if $\alpha$ is $J$-anti-invariant and $T(\alpha) \geq 0$ if $\alpha$ is semipositive. Therefore, by Lemmas~\ref{invariant_dual_lmm} and $\ref{semipositive_dual_lmm}$, $\omega$ is $J$-invariant and semipositive.
As $G$ is connected, the maps $gu$ are homotopic to $u$. Therefore, if $\alpha$ is closed,
$$\int_{g \in G} \bigg(\int_\Sigma (gu)^*\alpha\bigg) \d\mu = \bigg(\int_\Sigma u^*\alpha \bigg) \bigg(\int_G \d\mu \bigg) = \int_\Sigma u^*\alpha.$$
Thus, $\omega$ is Poincaré dual to $u_*[\Sigma]$. Since  $u_*[\Sigma]$ has positive self-intersection number, $\int_X \omega^2 > 0$, so $\omega$ has full rank at some point of $X$. As $\omega$ is $G$-invariant and $G$ acts transitively on $X$, $\omega$ has full rank everywhere. Therefore, it is a symplectic form taming $J$.
\end{proof}

\subsection{Structure currents}\label{structure_cycles_subsection}

In this section, we recall some basic facts from Sullivan's theory of structure currents. The primary reference is \cite{Sullivan}. Structure currents are used in two distinct ways in this paper. The first is that complex cycles, which form the conceptual core of the present work, are structure currents. The second is that structure currents provide the necessary geometric control on the ``bad'' set appearing in the proof of Proposition~\ref{PD_prp}.\\

A \textit{compact convex cone} in a locally convex topological vector space $V$ is a convex cone $\Lambda$ so that there exists a linear functional $L$ on $V$ such that $L(\Lambda-0) \!\subset\! (0,\infty)$ and $L^{-1}(1)\!\cap\!\Lambda$ is compact. A \textnormal{cone structure} on a manifold $X$ is a closed subset $A \subset X$ and a continuous family of compact convex cones $\Lambda_x \!\subset\!\bigwedge^p(T_x X)$ with $x\!\in\!A$ for some $p \!\in\!\Z^{\geq 0}$.

\begin{dfn}
Let $\Lambda$ be a cone structure on a manifold $X$. A current $T$ on $X$ is a \textit{structure current} with respect to $\Lambda$ or a \textit{$\Lambda$-current} if it belongs to the closed convex cone generated by the Dirac currents associated to multivectors in $\Lambda$. A structure current $T$ is called a \textit{structure cycle} if $\partial T=0$.
\end{dfn}

\begin{rmk}
There is a subtlety in the above definition. The closure is taken with respect to the strong dual topology, not the (perhaps more familiar) weak topology. However, this does not actually present a problem, due to the following result of Schwartz \cite[Chapitre~III, Théorème~XIII]{Schwartz}: a sequence of currents converges in the strong topology if and only if it converges in the weak topology. That is, a sequence of $k$-currents $T_i$ converges to $T$ in the strong topology if $T_i(\alpha)$ converges to $T(\alpha)$ for every $k$-form~$\alpha$. It is therefore often easy to check that a current is a structure current.
\end{rmk}

\begin{rmk}
We refer to a cone structure $\Lambda'$ as being \textit{slightly larger} than another cone structure~$\Lambda$ if $\Lambda\!-\!0$ is contained in the interior of $\Lambda'$, as subsets of $\bigwedge^pTX$. This means, in particular, that the closed subset of $X$ on which $\Lambda'$ is defined is larger than that for $\Lambda$. However, in the context of this paper, the underlying closed subsets are somewhat immaterial and this subtlety does not really make a difference.
\end{rmk}

If $(X,J)$ is a closed almost complex manifold and $\Lambda$ is the cone of complex bivectors, then Sullivan calls $\Lambda$-cycles \textit{complex cycles}. For this specific type of structure cycle, we find it more convenient to use the following alternative definition. The proof of Proposition~\ref{taming_prp} carries through unchanged using Definition~\ref{complex_cycle_dfn} (see \cite[Proposition~1.7]{Cattalani}).

\begin{dfn}\label{complex_cycle_dfn}
A 2-form $\alpha$ on an almost complex manifold is \textit{strictly positive} if $\alpha(v, Jv) > 0$ for all $v \neq 0 \in T_xX$ and $x\in X$. A $2$-current $T$ on a compact almost complex manifold $X$ is a \textit{complex cycle} if $\partial T = 0$ and $T(\alpha) > 0$ for all strictly positive 2-forms.
\end{dfn}

\begin{eg}
A closed pseudoholomorphic curve gives rise to a complex cycle, as in Example~\ref{submanifold_current_eg}. Such complex cycles are exceptionally well-behaved; see \cite{Gue} for more pathological examples.
\end{eg}

Adding an arbitrarily small positive form to a semipositive form results in a positive form, so every semipositive form is a limit of positive forms. Therefore, $T(\alpha) \geq 0$ for every complex cycle $T$ on $X$ and every semipositive form $\alpha$. This fact is used in the proof of Lemma~\ref{pos_on_supp_lmm} below.

\begin{lmm}\label{pos_on_supp_lmm}
Let $\alpha$ be a semipositive form on $X$ and $T$ be a complex cycle on $X$. If $\alpha$ is strictly positive somewhere on $\supp(T)$, then $T(\alpha) > 0$.
\end{lmm}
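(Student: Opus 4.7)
The plan is to reduce the statement to two intermediate facts: (i) $T$ is nonnegative on \emph{every} semipositive 2-form, not just on strictly positive ones; and (ii) if $\omega$ is a Hermitian form on $X$ and $\psi$ is a nonnegative smooth function with $\psi(x_0) > 0$ for some $x_0 \in \supp(T)$, then $T(\psi\omega) > 0$. The target inequality will then follow by decomposing $\alpha = (\alpha - \psi\omega) + \psi\omega$ for a bump $\psi$ concentrated near the point of $\supp(T)$ where $\alpha$ is strictly positive.

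To prove (i), I will fix a Hermitian form $\omega$ on $X$ (which exists on any closed almost complex manifold). For any semipositive 2-form $\gamma$, the form $\gamma + \varepsilon \omega$ is strictly positive for every $\varepsilon > 0$, so
$$T(\gamma) + \varepsilon\, T(\omega) = T(\gamma + \varepsilon\omega) > 0$$
by the definition of a complex cycle. Letting $\varepsilon \to 0^+$ yields $T(\gamma) \geq 0$.

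For (ii), I will argue by contradiction, assuming $T(\psi\omega) = 0$. Let $\beta$ be any 2-form with compact support in the open set $\{\psi > 0\}$. By compactness of $\supp(\beta)$, there exist constants $c > 0$ and $M > 0$ with $\psi \geq c$ on $\supp(\beta)$ and $|\beta(v, Jv)| \leq M\, \omega(v, Jv)$ for all $v \in T_x X$ and $x \in \supp(\beta)$. Then for any $\lambda \in (0, c/M]$, both $\psi\omega + \lambda\beta$ and $\psi\omega - \lambda\beta$ are semipositive on $X$. Applying (i) to each gives $T(\psi\omega) \pm \lambda\, T(\beta) \geq 0$, which combined with $T(\psi\omega) = 0$ forces $T(\beta) = 0$. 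Since $\beta$ was arbitrary in $\Omega^2_c(\{\psi > 0\})$, the current $T$ vanishes on the open neighborhood $\{\psi > 0\}$ of $x_0$, contradicting $x_0 \in \supp(T)$.

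With (i) and (ii) in hand, the conclusion is immediate: continuity and compactness yield a precompact neighborhood $U'$ of $x_0$ and a constant $c > 0$ with $\alpha(v, Jv) \geq c\, \omega(v, Jv)$ throughout $\overline{U'}$. Choosing a nonnegative cutoff $\psi$ supported in $U'$ with $\psi(x_0) > 0$ and $\psi \leq c$ pointwise makes $\alpha - \psi\omega$ semipositive on $X$, so
$$T(\alpha) = T(\alpha - \psi\omega) + T(\psi\omega) \geq 0 + T(\psi\omega) > 0$$
by (i) and (ii). The only mildly delicate point is the uniform-constant bookkeeping that underlies both the semipositivity of $\psi\omega \pm \lambda\beta$ in (ii) and the comparison $\alpha \geq c\omega$ in the final step; both are routine consequences of compactness, and there is no deeper analytic difficulty, since the argument uses only linearity of $T$, the positivity hypothesis on strictly positive forms, and the definition of $\supp(T)$.
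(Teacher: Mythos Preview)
Your proof is correct and follows essentially the same approach as the paper: both decompose $\alpha$ as a semipositive remainder plus a small semipositive bump supported near a point of $\supp(T)$, then use nonnegativity of $T$ on semipositive forms for the first piece and strict positivity on the second. Your steps (i) and (ii) make explicit what the paper compresses into the single line ``choose a semipositive form $\beta \in \Omega^2_c(U)$ with $T(\beta) > 0$''; in particular your contradiction argument for (ii) is a clean justification of a fact the paper takes for granted.
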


\begin{proof}
Suppose that $\alpha$ is strictly positive at $x \in \supp(T)$. Let $U \subset X$ be a precompact open neighborhood of $x$ such that $\alpha$
is strictly positive on $\overline{U}$. Choose a form \hbox{$\beta \in \Omega^{2}_c(U)$} with \hbox{$T(\beta) > 0$.} By the compactness
of $\overline{U}$, there exists $\varepsilon >0$ such that $\alpha - \varepsilon \beta$ is a semipositive form on a neighborhood of $\supp(T)$. Therefore,
$$T(\alpha) = \varepsilon T(\beta) + T(\alpha - \varepsilon \beta) > 0,$$
as claimed.
\end{proof}

The rest of this section is devoted to some results about general structure currents. The following results are applied in this paper not to complex cycles, but to other structure cycles. It is well-known that every loop in Euclidean space bounds a 2-chain: one simply extends radially. The following lemma highlights a geometric aspect of this radial extension.

\begin{lmm}\label{cone_filling_lmm}
Let $X$ be a smooth manifold, $\Lambda$ be a cone structure on $X$, and $\iota: \overline{D}\lra X$ be a smooth embedding of a closed disk such that the image of $\d\iota$ lies in $\Lambda$. Then, for any slightly larger cone structure~$\Lambda'$, there exists $\varepsilon > 0$ such that, if $f:S^1 \lra X$ is a smooth map $\varepsilon$-close to $\iota|_{\partial\overline{D}}$ in $C^1$, then there is a $\Lambda'$-current $T$ with $\partial T = f_*[S^1]$.
\end{lmm}

\begin{proof}
By taking appropriate coordinates, we may assume $\iota$ is the inclusion of the set
$$\overline{D} =\{(x_1,x_2,0,\dots,0) \in \R^n \,:\, x_1^2+x_2^2\leq 1 \} \subset \R^n.$$
In particular, $\iota(rx) = r\iota(x)$ for $x \in \overline{D}$ and $r\in[0,1]$. In polar coordinates, define
$$\widehat{f}: \overline{D} \lra \R^n, \quad \widehat{f}(r,\theta) := rf(\theta).$$
The map $\widehat{f}$ is smooth on $\overline{D}-0$. If $\widehat{f}|_{\partial\overline{D}} := f$ is $C^1$-close to $\iota|_{\partial\overline{D}}$, then $\widehat{f}$ is $C^1$-close to $\iota$ on $\overline{D}-0$ by the scaling symmetry of $\widehat{f}$ and $\iota$. If $\widehat{f}$ is sufficiently close to $\iota$ in $C^1$ on $\overline{D}-0$, then the image of $\d\widehat{f}$ lies in $\Lambda'$. For $r \in (0,1)$, let $A_r$ be the annulus resulting from removing a disk of radius $r$ from $\overline{D}$. As $\widehat{f}|_{A_r}$ is smooth, the $\Lambda'$-current
$$T_r: \Omega_c^2(X) \lra \R, \quad T_r(\alpha) := \int_{A_r}\widehat{f}^*\alpha,$$
is well-defined. Let
$T := \lim_{r \rightarrow 0} T_r;$
it is a $\Lambda'$-current. By Stokes' theorem, $\partial T =f_*[S^1]$.
\end{proof}

\begin{rmk}\label{family_of_boundaries_rmk}
As the space of $\Lambda'$-currents is a closed convex cone, it follows from Lemma~\ref{cone_filling_lmm} that any current in the convex hull of the currents of integration over curves $\varepsilon$-close in $C^1$ to $\iota|_{\partial\overline{D}}$ is the boundary of a $\Lambda'$-current. This applies, for instance, to the integral over a measured family thereof, in the sense of Lemma~\ref{integrated_family_lmm}.
\end{rmk}

The next lemma is essentially \cite[Proposition~I.9]{Sullivan}. Although it is not emphasized, it is clear that the smoothing procedure described can be applied just on an open set, leaving the rest unchanged; i.e. smoothing can be done in a relative manner. That is the content of Lemma~\ref{smoothing_lmm}.

\begin{lmm}\label{smoothing_lmm}
Let $X$ be a smooth manifold, $K \subset X$ be compact, and $\Lambda$ be a cone structure on $X$. Let $T$ be a closed current such that $T|_{\textnormal{int}\,K}$ is a $\Lambda$-current and $T$ is smooth on a neighborhood of $X-\textnormal{int}\,K$. For any slightly larger cone structure~$\Lambda'$, there is an exact current $S$ supported in $\textnormal{int}\,K$ such that $T+S$ is a smooth $\Lambda'$-current.
\end{lmm}

The proof of the next lemma follows the same line as that of Lemma~\ref{semipositive_dual_lmm}.

\begin{lmm}\label{cone_positivity_lmm}
Let $X$ be an oriented 4-manifold, $\dvol$ be a volume form on $X$, and $\Lambda$ be a cone structure on $X$. Let $\omega$ be a smooth 2-form such that $\curindby{\omega}$ is a $\Lambda$-current. If $x\!\in\!X$ and $v\!\in\!\bigwedge^2(T_xX)$ is a bivector such that $\dvol(v,w) > 0$ for every $w\!\in\!\Lambda_x$, then $\omega(v) \geq 0$.
\end{lmm}

\begin{proof}
Suppose $v$ satisfies $\omega(v) \!<\! 0$ and $\dvol(v,w) > 0$ for every $w\!\in\!\Lambda_x$. Extend it to a compactly supported bivector field $\eta$ on $X$ such that $\omega(\eta(x))\!\leq\!0$ and $\dvol(\eta(x),w)\!\geq\!0$ for every $x\!\in\!X$ and every~$w \!\in\! \Lambda$.
As $\omega(v) < 0$,
$$\curindby{\omega}\big(\dvol(\eta(x),-)\big)=\int_X \omega \wedge \dvol(\eta(x),-)=\int_X\omega(\eta(x))\dvol< 0,$$
contradicting the fact that $\curindby{\omega}$ is a $\Lambda$-current.
\end{proof}

\section{Preliminaries: curves}\label{deformation_sec}

\noindent
In this section, we recall the necessary perturbation theory for pseudoholomorphic curves. Proposition~\ref{CurveMS_prp} below collects the input on the local structure 
of moduli spaces of $J$-holomorphic curves needed for our purposes.
It is a variation on \cite[Theorem~2]{HLS} and \cite[Lemma~1.5.1]{IvSh2}. 
We deduce it from \cite{HLS,IvSh2,Sh} in Appendix \ref{CurveMS_sec}.\\

Let $(X,J)$ be an almost complex manifold. We recall that a \textit{pseudoholomorphic curve} is a pseudoholomorphic map $u: \Sigma \lra X$ from a Riemann surface, considered up to reparameterization. If $\Sigma$ is connected, the curve is called \textit{irreducible}.
We call a half-dimensional submanifold $Y\!\subset\!X$ \textit{totally real} 
if \hbox{$TY\!\!\cap\!J(TY)\!=\!Y$}.
By the Carleman Similarity Principle \cite[Theorem~2.3.5]{MS}, 
a non-constant $J$-holomorphic map \hbox{$u\!:\Si\!\lra\!X$}
from a connected Riemann surface, possibly with boundary,
determines a complex line subbundle \hbox{$\cT\!u\!\subset\!u^*TX$} containing~$\nd u(T\Si)$. 
We denote~by
$$\cN u\bydefinitiontwo \frac{u^*TX}{\cT u}\lra \Si$$
the \textit{normal bundle} of~$u$.
If $u|_{\prt\Si}$ is an embedding with the image contained in a totally real submanifold $Y\!\subset\!X$,
then the \textit{normal bundle}
$$\cN_Y(\prt u)\bydefinitiontwo\frac{\{u|_{\prt\Si}\}^{\!*}TY}{\nd u(T(\prt\Si)\!)}\lra \prt\Si$$
of~$\prt u\!\bydefinitiontwo\!u|_{\prt\Si}$ in~$Y$ is a real subbundle of~$\cN u|_{\prt\Si}$
(the embedding condition can be bypassed in general).
If, in addition, $\Si$ is compact, we denote by~$\mu_Y^{\cN}(u)$ the Maslov index 
of the bundle pair~$(\cN u,\cN_Y(\prt u)\!)$ over~$(\Si,\prt\Si)$;
see \cite[Appendix~C.3]{MS}.\\

\noindent
We define the \textit{genus} $g(\Si)$ of a compact connected Riemann surface $\Si$ 
with $b$~boundary components~by
$$2-2g(\Si)=\chi(\Si)+b,$$
where $\chi(\Si)$ is the Euler characteristic of $\Si$.
If $(X,J)$ is an almost complex manifold, $Y\!\subset\!X$ is a submanifold,
and $g,b\!\in\!\Z^{\ge0}$, we denote
\begin{enumerate}[label=$\bullet$,leftmargin=*]

\item by~$\fM_{(g,b)}(X,Y)$ the moduli space of equivalence classes of 
non-constant $J$-holomorphic maps \hbox{$u\!:\Si\!\lra\!X$} from compact connected 
genus~$g$ Riemann surfaces with~$b$ boundary components such that $u(\prt\Si)\!\subset\!Y$, 

\item by~$\fM_{(g,b),1}(X,Y)$ the moduli space of equivalence classes of pairs~$(u,z_0)$
with \hbox{$u\!:\Si\!\lra\!X$} as above and $z_0\!\in\!\Si$, and 

\item by $\fM_{(g,b),1}^{\circ}(X,Y)\!\subset\!\fM_{(g,b),1}(X,Y)$ the subspace of pairs
$[u,z_0]$ as above so that $z_0\!\not\in\!\prt\Si$.

\end{enumerate}
Define
\begin{alignat*}{2}
\ff\!:\fM_{(g,b),1}(X,Y)&\lra \fM_{(g,b)}(X,Y), &\qquad
\ff\big([u,z_0]\big)&=[u],\\
\ev\!:\fM_{(g,b),1}(X,Y)&\lra X, &\qquad
\ev\big([u,z_0]\big)&=u(z_0).
\end{alignat*}


\begin{prp}\label{CurveMS_prp}
Suppose $(X,J)$ is an almost complex 4-manifold, 
$Y\!\subset\!X$ is a totally real surface, \hbox{$g,b\!\in\!\Z^{\ge0}$},
and \hbox{$[u\!:\Si\!\lra\!X]\!\in\!\fM_{(g,b)}(X,Y)$}.
If $u|_{\prt\Si}$ is an embedding and \hbox{$\mu_Y^{\cN}(u)\!\ge\!4g\!+\!2b\!+\!1$}, then
\begin{enumerate}[label=(\arabic*),leftmargin=*]

\item\label{CurveMS0_it}
a neighborhood~$\fM_0$ of $[u]$ in~$\fM_{(g,b)}(X,Y)$ is a smooth manifold; 

\item\label{CurveMS1_it} the subspace $\fM_1\!\bydefinitiontwo\!\ff^{-1}(\fM_0)$
of~$\fM_{(g,b),1}(X,Y)$ is a smooth manifold with boundary and the maps
\BE{CurveMS1_e}\ff\!:\fM_1\lra\fM_0 \quad\hbox{and}\quad
\ev\!:\fM_1^{\circ}\!\bydefinitiontwo\!\fM_1\!\cap\!\fM_{(g,b),1}^{\circ}(X,Y)\lra X\!-\!Y\EE
are smooth submersions;

\item\label{CurveMS10_it} for every $x_0\!\in\!X\!-\!Y$,
the subspace $\fM_1^{\circ}(x_0)\!\bydefinitiontwo\!\ev^{-1}(x_0)\!\cap\!\fM_1$ of~$\fM_1^{\circ}$ 
is a smooth submanifold with
\BE{CurveMS10_e}\dim\fM_1^{\circ}(x_0)=\dim\fM_1-2\,;\EE

\item\label{CurveMS11_it} for every $x_0\!\in\!X\!-\!Y$ and $v_0\!\in\!T_{x_0}X$ nonzero,
the subspace $\fM_1^{\circ}(v_0)\!\subset\!\fM_1^{\circ}(x_0)$ of pairs $[u',z_0]$ such that 
$v_0\!\in\!\cT u'|_{z_0}$ is a smooth submanifold with
\BE{CurveMS11_e}\dim\fM_1^{\circ}(v_0)=\dim\fM_1-4\,.\EE

\end{enumerate}
\end{prp}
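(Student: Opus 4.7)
The plan is to prove all four parts by applying the implicit function theorem to the Cauchy--Riemann operator $\dbar_J$, viewed as a Fredholm section of a Banach bundle over the Banach manifold of $W^{k,p}$-maps $(\Si,\prt\Si)\lra(X,Y)$ modulo reparameterization. Once the appropriate linearization is surjective, (1) gives $\fM_0$ as a smooth submanifold; $\fM_1$ in (2) is the total space of the universal curve over $\fM_0$, and $\ff$ is automatically a smooth submersion with fiber $\Si$; (3) and (4) are cut out as transverse preimages from submersivity of $\ev$ and of a companion ``tangent line'' evaluation. After quotienting the full linearization by the tangential variations $\nd u(T\Si)$, the normal linearization is a real-linear Cauchy--Riemann operator $D_u^\cN$ on the complex line bundle $\cN u$ with totally real boundary values in $\cN_Y(\prt u)$, of Maslov index $\mu_Y^\cN(u)$.

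The key input is automatic transversality in the sense of \cite[Theorem~2]{HLS}, extended to bordered surfaces in \cite[Lemma~1.5.1]{IvSh2} and \cite{Sh}. Since $\cN u$ has complex rank~$1$, the threshold Maslov index for surjectivity of $D_u^\cN$ is of order $2g+b-1$. For surjectivity of the $0$-jet evaluation $\ker D_u^\cN \lra \cN u|_{z_0}$ at an interior point $z_0$ (needed to show $\ev$ is a submersion in (2)) one tensors $\cN u$ by $\cO(-z_0)$, dropping the Maslov index by~$2$. For surjectivity of the full $1$-jet evaluation at $z_0$ (needed for (4)) one tensors by $\cO(-2z_0)$, dropping it by~$4$. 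The hypothesis $\mu_Y^\cN(u)\geq 4g+2b+1$ is calibrated so that all three twisted Maslov indices remain above the HLS threshold simultaneously.

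Granted the surjectivity, (1) follows from the implicit function theorem. For (2), the fiber of $\ff$ at $[u]$ is~$\Si$, so $\fM_1$ is a smooth manifold with boundary and $\ff$ is a submersion; submersivity of $\ev$ at $[u,z_0]\in\fM_1^\circ$ comes from combining the $0$-jet statement above with the freedom to vary $z_0$ in the interior of~$\Si$, which together cover every direction in $T_{x_0}X$. For (3), $\fM_1^\circ(x_0)=\ev^{-1}(x_0)$ is a smooth submanifold of codimension~$4$ by the submersion $\ev$, whence $\dim\fM_1^\circ(x_0)=\dim\fM_1^\circ-4=\dim\fM_0-2$, giving \eref{CurveMS10_e}. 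For (4), promote $\ev$ on $\fM_1^\circ$ to the map $[u',z_0]\mapsto(u'(z_0),[\cT u'|_{z_0}]) \in X\times \P(TX)$; the $1$-jet surjectivity ensures that this lifted map is a submersion at $[u,z_0]$, so $\fM_1^\circ(v_0)$ is its transverse preimage of $(x_0,[v_0])$, a submanifold of codimension $4+2=6$ in $\fM_1^\circ$ and of the asserted dimension.

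The main obstacle is the bordered, twisted automatic transversality itself: showing that the kernel of $D_u^\cN$, and of its twists by $\cO(-z_0)$ and $\cO(-2z_0)$, surjects onto the appropriate fibers, and verifying that the numerical margin $4g+2b+1$ is precisely what the three successive twists demand. This technical core is developed in the appendix, drawing on \cite{HLS,IvSh2,Sh}.
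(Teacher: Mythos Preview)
Your plan matches the paper's proof in the appendix essentially step for step: reduce to the normal operator $D_u^{\cN}$, invoke automatic transversality \`a la \cite{HLS} in the bordered setting, and twist by $\cO(-z_0)$ and $\cO(-2z_0)$ to get the $0$-jet and $1$-jet surjectivity needed for \ref{CurveMS1_it}--\ref{CurveMS11_it}, with the thresholds $4g+2b-3$, $4g+2b-1$, $4g+2b+1$ lining up exactly as you say.

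There is one genuine subtlety you gloss over. In your argument for submersivity of $\ev$, you obtain the tangential direction $\cT u'|_{z_0}\subset T_{x_0}X$ by ``varying $z_0$ in the interior of~$\Si$''. This only works when $z_0$ is an immersive point of~$u'$; if $z_0\in S_{u'}$ (i.e.\ $\nd_{z_0}u'=0$), moving $z_0$ contributes nothing to the image of $\nd_{[u',z_0]}\ev$. The paper handles this case by invoking a further consequence of \cite[Lemma~1.5.1]{IvSh2}: the kernel of the projection $\ker\ov{D}_{Y;J;u'}\to\ker D_{Y;J;u'}^{\cN}$ surjects onto $\bigoplus_{z\in S_{u'}}\cT u'|_z$, so even at a non-immersive marked point there are deformations of the map (not of the marked point) that hit the tangential direction. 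The same issue recurs in \ref{CurveMS11_it}. This is not a large gap, but it is an extra input beyond the twisting argument, and your outline should acknowledge it rather than folding it silently into ``the technical core''.
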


We note that the assumption that $X$ has dimension $4$ is essential. In higher dimensions, no condition on the Maslov index alone is sufficient to guarantee that the conclusions of Proposition~\ref{CurveMS_prp} hold; see \cite[Remark~(3)]{HLS}. We also emphasize that a submersion is not necessarily surjective.

\begin{dfn}
Let $(X,J)$ be an almost complex manifold. A closed pseudoholomorphic curve $C \subset X$ is \textit{free} if it admits a pseudoholomorphic parameterization $u: \Sigma \lra X$ which satisfies \ref{CurveMS0_it} and \ref{CurveMS1_it} of Proposition \ref{CurveMS_prp} with $Y = \emptyset$.
\end{dfn}

\begin{eg}
The lines in $\mathbb{CP}^2$ and the smooth fibers of a surface bundle over a surface are free.
\end{eg}

If an immersed surface $S$ has $n$ points of self-intersection, all of which are positive transverse double points, then $S$ is said to have $n$ \textit{nodes}. An immersed surface $S$ is said to have $n$ nodes if some arbitrarily small perturbation of $S$ has $n$ nodes. This notion is referred to as having $\delta$-invariant $n$ in some contexts in algebraic geometry.

\begin{eg}\label{free_eg}
Let $(X,J)$ be an almost complex 4-manifold and $C \subset X$ be an immersed pseudoholomorphic curve of genus $g$ with $n$ nodes. Let $u: \Sigma \lra X$ be a pseudoholomorphic parameterization of $C$. It follows from the proof of Proposition \ref{CurveMS_prp} in Appendix~\ref{CurveMS_sec} that $C$ is free~if
$$4g - 1 \leq \mu_\emptyset^\cN(u) = 2\langle c_1(\mathcal{N}u), [\Sigma] \rangle.$$
This and the Adjunction Inequality \cite[Theorem~2.6.4]{MS} imply that if $C$ has self-intersection number at least $2g + 2n$, then it is free.
\end{eg}

Proposition \ref{CurveMS_prp} shows that a pseudoholomorphic curve in an almost complex 4-manifold with sufficiently large Maslov index admits many deformations. Lemma~\ref{boundary_conditions_lmm} below shows that boundary conditions can be chosen to make the Maslov index arbitrarily large.

\begin{lmm}\label{boundary_conditions_lmm}
Let $(X,J)$ be a closed almost complex 4-manifold and $u: \Sigma \lra X$ be a nonconstant pseudoholomorphic map from a closed connected Riemann surface. For any $M \in \mathbb{Z}^{\geq 0}$ and open subset $U \subset X$ such that $u(\Sigma) \cap U \neq \emptyset$, there exists a totally real surface $Y \subset U$ and a curve $\gamma$ bounding an open disk $D \subset \Sigma$ such that $u(\overline{D}) \subset U$, $u(\gamma) \subset Y$, and 
\begin{equation}
\label{boundary_conditions_eqn}
\mu_Y^\cN(u|_{\Sigma - D}) \geq M.
\end{equation}
\end{lmm}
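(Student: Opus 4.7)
The plan is to construct $D$, $\gamma$, and $Y$ in three stages: first, choose a small embedded disk $D \subset \Sigma$ near a regular point of $u$ whose closure maps into $U$; second, build an initial totally real surface $Y_0 \subset U$ containing $u(\gamma)$ by flowing along a transverse vector field; third, twist that vector field using $J$ to boost the Maslov index by any prescribed amount.

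Since $u$ is nonconstant and pseudoholomorphic, its critical set is discrete by the Carleman similarity principle, so the regular locus of $u$ is open and dense in $\Sigma$. Combined with $u(\Sigma) \cap U \neq \emptyset$, we may pick a regular point $z_0 \in u^{-1}(U)$. A sufficiently small coordinate disk $D \ni z_0$ then has $u|_{\overline D}$ an embedding into $U$, and $u|_\gamma$ an embedded smooth loop in $U$, where $\gamma = \prt D$.

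Pick a smooth vector field $\nu_0$ along $u(\gamma)$ in $TX|_{u(\gamma)}$ whose projection $[\nu_0(q)]$ to $\cN u|_q$ is nowhere zero. The plane $\mathrm{span}_{\R}(du(T_q\gamma), \nu_0(q))$ is then automatically totally real: it meets $\cT u|_q$ only in the real line $du(T_q\gamma)$ (itself totally real in the complex line $\cT u|_q$), and its further projection $\R[\nu_0(q)]$ is $\R$-linearly independent from its $J$-image in the complex line $\cN u|_q$. Flowing $u(\gamma)$ along $\nu_0$ for a short time produces an embedded totally real surface $Y_0 \subset U$ containing $u(\gamma)$. Now parameterize $\gamma = \R/2\pi\Z$ and, for each $k \in \Z^{\geq 0}$, define
\[
\nu_k(\theta) = \cos(k\theta)\,\nu_0(\theta) + \sin(k\theta)\,J\nu_0(\theta).
\]
Since $[\nu_0]$ and $J[\nu_0]$ are $\R$-independent in the complex line $\cN u|_q$, the class $[\nu_k]$ remains nowhere zero, and the same construction yields a totally real surface $Y_k \subset U$ containing $u(\gamma)$. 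As $J$ acts on $\cN u|_q$ as rotation through $\pi/2$, the totally real subbundle $\cN_{Y_k}(\prt u) = \R[\nu_k] \subset \cN u|_\gamma$ winds $k$ more full times than $\cN_{Y_0}(\prt u)$ inside $\cN u|_\gamma$, so by the standard relation between Maslov index and winding number (cf.\ \cite[Appendix~C.3]{MS}),
\[
\mu_{Y_k}^{\cN}(u|_{\Sigma - D}) = \mu_{Y_0}^{\cN}(u|_{\Sigma - D}) + 2k.
\]
Taking $k$ large enough produces $Y = Y_k$ satisfying $\mu_Y^\cN(u|_{\Sigma - D}) \geq M$.

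The only non-routine point is the Maslov index identity in the last display. It depends on sign and trivialization conventions in the definition of $\mu_Y^{\cN}$, but under any consistent convention it reduces to a direct winding-number computation for the real line $\R[\nu_k]$ inside the complex line bundle $\cN u|_\gamma$ trivialized along $\gamma$.
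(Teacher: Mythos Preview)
Your argument is correct and matches the paper's approach: both pick a small embedded disk in $\Sigma$ mapping into $U$, prescribe a nowhere-vanishing section of $\cN u$ along its boundary with controlled winding, and exponentiate that section to obtain the totally real surface~$Y$. The only cosmetic difference is that the paper fixes the winding number of the section in one shot and reads off $\mu_Y^\cN(u|_{\Sigma-D})$ from the additivity relation $2\langle c_1(\cN u),[\Sigma]\rangle = \mu_Y^\cN(u|_{\Sigma-D}) + \mu_Y^\cN(u|_{\overline D})$, whereas you start from an arbitrary $\nu_0$ and twist by $e^{ik\theta}$; your restriction to $k\ge 0$ is harmless provided the parameterization of $\gamma$ is taken compatibly with the boundary orientation of $\Sigma-D$, which is exactly the sign issue you flag in your final paragraph.
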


\begin{proof}
Pick a simple closed curve $\gamma \subset u^{-1}(U)$ bounding an open disk $D \subset \Sigma$ so that $u|_{\overline{D}}$ is an embedding.  The normal bundle $\mathcal{N}u$ trivializes over $\overline{D}$. Associate $\mathcal{N}u|_{\overline{D}}$ with a complement to $du(T\overline{D})$ in $TX$. Take a section $\xi$ of $\mathcal{N}u$ along $\gamma$ such that the winding number around the zero section is $2\langle c_1(\mathcal{N}u), [\Sigma]\rangle - M$. Since $u$ is pseudoholomorphic, the subbundle of $TX$ generated by $\xi$ and $T\gamma$ is totally real. This is an open property, so flowing for a short time under the exponential map of $\xi$ yields a totally real surface $Y \subset U$. By \cite[Theorem C.3.5]{MS},
$$2\langle c_1(\mathcal{N}u), [\Sigma]\rangle = \mu_\emptyset^\cN(u) = \mu_Y^\cN(u|_{\Sigma - D}) + \mu_Y^\cN(u|_{\overline{D}}) = \mu_Y^\cN(u|_{\Sigma - D}) + (2\langle c_1(\mathcal{N}u),[\Sigma]\rangle - M).$$
This gives (\ref{boundary_conditions_eqn}).
\end{proof}

\section{Construction of Poincaré dual}\label{PD_sec}

In this section, we use integration along the fiber to produce a well-behaved Poincaré dual to a pseudoholomorphic curve. The main result of this section is Proposition \ref{PD_prp}. Its proof is based on the proof of Lemma \ref{free_curve_PD_lmm}, but with additional complications. If the curve $C$ fails to be sufficiently flexible, we have to vary it as a curve with boundary. Lemma \ref{boundary_conditions_lmm} shows that boundary conditions can be chosen to make the curve as flexible as one wishes. Integration along the fiber then produces a current which is well-behaved away from a small ``bad'' set. We then appeal to Lemma~\ref{bad_set_lmm} to control the resulting current on this small set.

\begin{lmm}\label{free_curve_PD_lmm}
Let $(X,J)$ be a closed almost complex 4-manifold and $C$ be a closed pseudoholomorphic curve on $X$. If $C$ is free, then it is Poincaré dual to a semipositive form $\omega$.
\end{lmm}

\begin{proof}
Let $u_0: \Sigma \lra X$ be a pseudoholomorphic parameterization of $C$. With notation as in Proposition~\ref{CurveMS_prp}, let
$$\mathfrak{f}: \M_1 \lra \M_0~~\hbox{and}~~\textnormal{ev}: \M_1 = \M_1^{\circ} \lra X$$
be the forgetful and evaluation maps, respectively. Since $C$ is free, $\M_0$ and $\M_1^\circ$ are smooth manifolds and $\textnormal{ev}$ is a submersion. Let $\d\nu$ be a compactly supported nonnegative form on $\M_0$ such that $\int_{\M_0} \d\nu = 1$. By Lemma \ref{submersion_pushforward_lmm} with $U = X$, $\omega = \textnormal{ev}_* \mathfrak{f}^* \d\nu$ is a well-defined (smooth) 2-form on $X$.\\

Let $\alpha \!\in\! \Omega^2(X)$. By (\ref{integrated_family_eqn}),
$$\curindby{\omega}(\alpha) = \int_{u \in \M_0} \bigg( \int_{\Sigma} u^*\alpha \bigg) \d\nu.$$
As $u$ is pseudoholomorphic for every $u \!\in\! \M_0$, $\curindby{\omega}(\alpha) \!\geq\! 0$ for every semipositive 2-form $\alpha$. Therefore,~$\omega$ is semipositive by Lemma~\ref{semipositive_dual_lmm}. Since $u$ is homotopic to $u_0$ for all $u \in \M_0$,
$$\int_X\omega \wedge \alpha =\curindby{\omega}(\alpha) =  \int_{u \in \M_0} \bigg( \int_{\Sigma} u^*\alpha \bigg) \d\nu = \bigg( \int_C \alpha \bigg) \bigg(\int_{\M_0}\d\nu \bigg) = \int_\Sigma u_0^* \alpha$$
if $\alpha$ is a closed $2$-form on $X$. Therefore, $\omega$ is Poincaré dual to $C$.
\end{proof}

Let $(X,J)$ be an almost complex 4-manifold with a Riemannian metric and
$$\eta := \xi \wedge J\xi \in \Gamma\big(X,\bigwedge\nolimits^{\!2}TX\big)$$
be a continuous \textit{field of unit complex bivectors}, i.e.~$|\eta|=1$. For $\delta\geq0$, define
\begin{equation}\label{Lambda_delta_eqn}
\conestr{\delta} := \Bigr\{w \in \bigwedge\nolimits^{\!2}T_xX: x \in X, ~~\big|w-|w|\eta(x)\big| \leq\delta|w| \Bigl\} \subset \bigwedge\nolimits^{\!2}TX.
\end{equation}
For any closed subset $A \!\subset\!X$, $\conestr{\delta}|_A$ is a cone structure on $X$.

\begin{lmm}\label{bad_set_lmm}
Let $\varphi$ be a semipositive form on $X$ and $A\!\subset\!X$ be a compact subset so that \hbox{$\varphi(\eta(x))\!>\!0$} for all $x\!\in\!A$. There exists $\delta\!>\!0$ such that for every 2-form $\omega$ so that $\curindby{\omega}$ is a~\hbox{$\conestr{\delta}|_A$-current}, there exists $K\!>\!0$ such that $\omega\!+\!K\varphi$ is a semipositive form on $X$. 
\end{lmm}

\begin{proof}
Let $\dvol$ be the volume form on $X$ determined by the Riemannian metric and $J$. Define
$$\mathbb{G}_2^+(X) := \{ u\wedge Ju: u \in TX,~|u\wedge Ju|=1\}
\subset \bigwedge\nolimits^{\!2}TX.$$
If $\delta\geq0$ and $x \in X$, then
\begin{equation}\label{vol_cone_ineq}
\dvol(v,w) \geq \Big(\dvol\big(\eta(x),v\big)-\delta\Big)|w| \quad \forall v \in \mathbb{G}_2^+(X)|_x,~~w\in\conestr{\delta}|_x\,.
\end{equation}
Since distinct elements of $\mathbb{G}_2^+(X)$ determine disjoint (except at $0$) tangent 2-planes on $X$, the continuous functions
$$\mathbb{G}_2^+(X) \lra \R^{\geq0},\quad v \mapsto \big|v- \eta(x)\big|,~\dvol\big(\eta(x),v \big) \quad \forall v\in \mathbb{G}_2^+(X)|_x,~x\in X,$$
vanish only along $\eta$. Since $\mathbb{G}_2^+(X)|_A$ is compact, it follows that for every $\varepsilon>0$, there exists $\delta>0$ such that
$$x\in A,~~ v \in \mathbb{G}_2^+(X)|_x,~~\dvol\big(\eta(x),v \big)\leq \delta \implies v\in \conestr{\varepsilon}|_x\,.$$
Along with (\ref{vol_cone_ineq}), the last implication gives
\begin{equation}\label{vol_cone_implication_ineg}
x\in A, ~~ v \in \mathbb{G}_2^+(X)|_x,~~w \in \conestr{\delta}|_x,~~\dvol(v,w) \leq0 \implies v \in \conestr{\varepsilon}|_x\,.
\end{equation}
Since $\mathbb{G}_2^+(X)|_A$ is compact and $\varphi>0$ on $\conestr{0}|_A$, there exist $\varepsilon,c>0$ such that $\varphi \geq c$ on $\conestr{\varepsilon} \cap\mathbb{G}_2^+(X)|_A$. Let $\delta>0$ be as in (\ref{vol_cone_implication_ineg}) and assume that $\omega$ is a 2-form on $X$ such that $\curindby{\omega}$ is a $\conestr{\delta}|_A$-current. Let $x\!\in \!A$ and $v \!\in\! \mathbb{G}_2^+(X)$. By (\ref{vol_cone_implication_ineg}), either $\varphi(v)\geq c$ or $\dvol(v,w)>0$ for all $w\!\in\! \conestr{\delta}|_x$.
By Lemma~\ref{cone_positivity_lmm}, $\omega(v)\geq0$ in the latter case. Thus, for each $v \!\in\! \mathbb{G}_2^+(X)|_A$, either $\varphi(v)\geq c$ or $\omega(v) \geq 0$.\\

Since $\mathbb{G}_2^+(X)|_A$ is compact,
$$K := -\inf_{\substack{v \in \mathbb{G}_2^+(X)|_A\\ \omega(v) < 0}} \frac{\omega(v)}{\varphi(v)}$$
is a finite positive number (if $\omega(v)\geq0$ for all $v\in \mathbb{G}_2^+(X)|_A$, simply take $K=0$).
By the definition of $K$ and the semipositivity of $\varphi$, $\omega+K\varphi$ is nonnegative on $\mathbb{G}_2^+(X)|_A$. Since the support of $\omega$ is contained in $A$ (because it is a $\conestr{\delta}|_A$-current), it follows that $\omega+K\varphi$ is a semipositive form on~$X$.
\end{proof}

\begin{figure}
\centering
\tikzset{every picture/.style={line width=0.75pt}} 

\begin{tikzpicture}[x=0.75pt,y=0.75pt,yscale=-1,xscale=1]
\begin{scope}
\clip(433.19,28.73) rectangle (633.19,228.73);
\draw  [dash pattern={on 4.5pt off 4.5pt}] (458.08,128.73) .. controls (458.08,89.67) and (491.71,58) .. (533.19,58) .. controls (574.67,58) and (608.3,89.67) .. (608.3,128.73) .. controls (608.3,167.8) and (574.67,199.47) .. (533.19,199.47) .. controls (491.71,199.47) and (458.08,167.8) .. (458.08,128.73) -- cycle ;
\draw    (568.11,133.66) .. controls (585.67,133.66) and (646.65,163.85) .. (718.31,223.14) .. controls (789.97,282.43) and (828.19,368.55) .. (826.47,441) .. controls (824.75,513.45) and (806.61,590.85) .. (742.35,653.2) .. controls (678.09,715.55) and (535.5,749.4) .. (517.02,749.4) .. controls (498.55,749.4) and (398,735.05) .. (321.74,675.84) .. controls (245.47,616.63) and (206.42,516.63) .. (204.57,441) .. controls (202.71,365.37) and (256.65,260.36) .. (327.75,206.16) .. controls (398.84,151.97) and (478.74,146.75) .. (498.93,146.75) ;
\draw    (568.11,120.56) .. controls (688.29,129.05) and (652.66,152.53) .. (724.32,211.82) .. controls (795.98,271.11) and (766.38,375.92) .. (832.48,429.68) .. controls (898.58,483.44) and (811.45,647.54) .. (748.36,641.88) .. controls (685.27,636.23) and (574.1,817.3) .. (523.03,738.08) .. controls (471.96,658.86) and (404.01,723.73) .. (327.75,664.52) .. controls (251.48,605.31) and (261.65,457.98) .. (210.58,429.68) .. controls (159.5,401.39) and (249.63,200.5) .. (333.76,194.85) .. controls (417.88,189.19) and (351.78,138.26) .. (499,132.6) ;
\draw    (568.11,133.66) .. controls (667.26,142.14) and (625.11,235.51) .. (715.24,215.71) .. controls (805.37,195.9) and (844.43,357.18) .. (790.35,365.66) .. controls (736.27,374.15) and (916.54,575.04) .. (823.4,569.38) .. controls (730.27,563.72) and (739.28,790.07) .. (601.08,730.65) .. controls (462.88,671.23) and (406.95,772.88) .. (330.68,713.67) .. controls (254.42,654.46) and (221.38,642.18) .. (219.52,566.55) .. controls (217.67,490.92) and (246.56,360) .. (231.54,272.29) .. controls (216.52,184.58) and (369.74,136.49) .. (498.93,133.66) ;
\draw    (568.11,107.47) .. controls (643.22,110.3) and (658.67,135.56) .. (730.33,194.85) .. controls (801.99,254.13) and (840.21,311.97) .. (838.49,384.41) .. controls (836.77,456.86) and (848.67,593.68) .. (784.41,656.03) .. controls (720.15,718.38) and (502.45,689.98) .. (483.97,689.98) .. controls (465.5,689.98) and (333.76,740.91) .. (297.7,712.62) .. controls (261.65,684.32) and (222.59,463.63) .. (177.53,418.37) .. controls (132.46,373.1) and (333.76,84.5) .. (381.83,158.06) .. controls (429.9,231.63) and (366.74,104.64) .. (498.93,107.47) ;
\draw    (568.11,146.75) .. controls (670.26,158.06) and (586.06,117.74) .. (670.18,148.86) .. controls (754.3,179.98) and (677.91,169.78) .. (676.19,242.23) .. controls (674.47,314.67) and (825.93,232.45) .. (853.44,290.33) .. controls (880.96,348.21) and (815.48,534.3) .. (775.33,573.26) .. controls (735.18,612.22) and (755.71,701.98) .. (703.23,731.71) .. controls (650.75,761.43) and (534.98,601.56) .. (528.97,714.73) .. controls (522.96,827.9) and (409.95,700.38) .. (333.69,641.17) .. controls (257.42,581.96) and (218.37,481.96) .. (216.52,406.33) .. controls (214.67,330.7) and (268.61,225.69) .. (339.7,171.49) .. controls (410.79,117.3) and (478.74,120.56) .. (498.93,120.56) ;
\end{scope}
\begin{scope}
\draw    (533.19,128.73) -- (498.93,107.47) ;
\draw    (533.19,128.73) -- (498.93,120.56) ;
\draw    (533.19,128.73) -- (498.93,133.66) ;
\draw    (533.19,128.73) -- (499,132.6) ;
\draw    (533.19,128.73) -- (498.93,146.75) ;
\draw    (568.11,146.75) -- (533.19,128.73) ;
\draw    (533.19,128.73) -- (568.11,107.47) ;
\draw    (568.11,120.56) -- (533.19,128.73) ;
\draw    (568.11,133.66) -- (533.19,128.73) ;
\draw    (568.11,133.66) -- (533.19,128.73) ;
\draw  [dash pattern={on 4.5pt off 4.5pt}] (299.17,48.14) .. controls (299.17,38.12) and (308.44,30) .. (319.88,30) .. controls (331.32,30) and (340.59,38.12) .. (340.59,48.14) .. controls (340.59,58.15) and (331.32,66.27) .. (319.88,66.27) .. controls (308.44,66.27) and (299.17,58.15) .. (299.17,48.14) -- cycle ;
\draw    (329.51,49.4) .. controls (334.35,49.4) and (351.17,57.14) .. (370.93,72.34) .. controls (390.69,87.55) and (401.23,109.63) .. (400.75,128.21) .. controls (400.28,146.78) and (395.28,166.63) .. (377.56,182.62) .. controls (359.84,198.6) and (320.51,207.28) .. (315.42,207.28) .. controls (310.32,207.28) and (282.6,203.6) .. (261.57,188.42) .. controls (240.54,173.24) and (229.77,147.6) .. (229.26,128.21) .. controls (228.75,108.81) and (243.62,81.89) .. (263.22,67.99) .. controls (282.83,54.09) and (304.86,52.76) .. (310.43,52.76) ;
\draw    (329.51,46.04) .. controls (362.65,48.22) and (352.82,54.24) .. (372.59,69.44) .. controls (392.35,84.64) and (384.18,111.52) .. (402.41,125.3) .. controls (420.64,139.09) and (396.61,181.16) .. (379.21,179.71) .. controls (361.81,178.26) and (331.16,224.69) .. (317.08,204.38) .. controls (302.99,184.07) and (284.26,200.7) .. (263.22,185.52) .. controls (242.19,170.34) and (245,132.56) .. (230.91,125.3) .. controls (216.83,118.05) and (241.68,66.54) .. (264.88,65.09) .. controls (288.08,63.64) and (269.85,50.58) .. (310.45,49.13) ;
\draw    (329.51,49.4) .. controls (356.85,51.58) and (345.23,75.52) .. (370.08,70.44) .. controls (394.94,65.36) and (405.71,106.71) .. (390.79,108.89) .. controls (375.88,111.06) and (425.59,162.57) .. (399.91,161.12) .. controls (374.22,159.67) and (376.71,217.71) .. (338.6,202.47) .. controls (300.49,187.24) and (285.07,213.3) .. (264.04,198.12) .. controls (243,182.94) and (233.89,179.79) .. (233.38,160.4) .. controls (232.87,141) and (240.84,107.44) .. (236.69,84.95) .. controls (232.55,62.46) and (274.81,50.12) .. (310.43,49.4) ;
\draw    (329.51,42.69) .. controls (350.22,43.41) and (354.48,49.89) .. (374.24,65.09) .. controls (394,80.29) and (404.54,95.12) .. (404.07,113.7) .. controls (403.59,132.27) and (406.88,167.35) .. (389.16,183.34) .. controls (371.43,199.33) and (311.4,192.05) .. (306.31,192.05) .. controls (301.21,192.05) and (264.88,205.11) .. (254.94,197.85) .. controls (245,190.6) and (234.23,134.01) .. (221.8,122.4) .. controls (209.37,110.79) and (264.88,36.8) .. (278.14,55.66) .. controls (291.39,74.52) and (273.98,41.96) .. (310.43,42.69) ;
\draw    (329.51,52.76) .. controls (357.68,55.66) and (334.46,45.32) .. (357.65,53.3) .. controls (380.85,61.28) and (359.79,58.66) .. (359.31,77.24) .. controls (358.84,95.81) and (400.6,74.73) .. (408.19,89.57) .. controls (415.78,104.41) and (397.72,152.13) .. (386.65,162.12) .. controls (375.58,172.11) and (381.24,195.12) .. (366.77,202.75) .. controls (352.3,210.37) and (320.37,169.37) .. (318.72,198.39) .. controls (317.06,227.41) and (285.89,194.71) .. (264.86,179.53) .. controls (243.83,164.35) and (233.06,138.71) .. (232.55,119.32) .. controls (232.04,99.92) and (246.92,73) .. (266.52,59.1) .. controls (286.13,45.2) and (304.86,46.04) .. (310.43,46.04) ;
\draw   (20,125) .. controls (20,83.58) and (53.58,50) .. (95,50) .. controls (136.42,50) and (170,83.58) .. (170,125) .. controls (170,166.42) and (136.42,200) .. (95,200) .. controls (53.58,200) and (20,166.42) .. (20,125) -- cycle ;
\draw  [dash pattern={on 4.5pt off 4.5pt}] (80,50) .. controls (80,38.95) and (88.95,30) .. (100,30) .. controls (111.05,30) and (120,38.95) .. (120,50) .. controls (120,61.05) and (111.05,70) .. (100,70) .. controls (88.95,70) and (80,61.05) .. (80,50) -- cycle ;
\end{scope}

\draw (118,19) node [anchor=north west][inner sep=0.75pt]   [align=left] {$U$};
\draw (345,21) node [anchor=north west][inner sep=0.75pt]   [align=left] {$U$};
\draw (583,48) node [anchor=north west][inner sep=0.75pt]   [align=left] {$U$};
\draw (134,193) node [anchor=north west][inner sep=0.75pt]   [align=left] {$C$};
\draw (385,196) node [anchor=north west][inner sep=0.75pt]   [align=left] {$T_\varepsilon$};
\draw (608,162) node [anchor=north west][inner sep=0.75pt]   [align=left] {$T_\varepsilon $};
\draw (529,158) node [anchor=north west][inner sep=0.75pt]   [align=left] {$T$};

\end{tikzpicture}
\caption{The construction of the form $\omega_C'$ in the proof of Proposition~\ref{PD_prp}. We start with a curve~$C$ and an open set $U$ intersecting $C$. We remove a small disk from $C$ and vary it in a family of pseudoholomorphic curves with boundary. Integration over this family produces the current $T_\varepsilon$, the boundary of which is supported in $U$. We then fill in the boundary by a current $T$. Smoothing the current~$T_\varepsilon+T$ results in the form $\omega_C'$.}
\end{figure}

\begin{prp}\label{PD_prp}
Let $(X,J)$ be a closed almost complex 4-manifold. Let $C$ be a closed irreducible pseudoholomorphic curve in $X$ and $\varphi$ a semipositive form on $X$ such that
$$\int_C \varphi > 0.$$
Then, there exist a closed 2-form $\omega_C$ on $X$ and a constant $K > 0$ such that
\begin{enumerate}[label=(\arabic*),leftmargin=*]
    \item\label{main_prp_PD} $\omega_C$ is Poincaré dual to $C$,
    \item\label{main_prp_J} $\omega_C$ is $J$-invariant on $X -\supp(\varphi)$, 
    \item\label{main_prp_semi} $\omega_C + K\varphi$ is a semipositive form, and
    \item\label{main_prp_posi} $\omega_C + K\varphi$ is strictly positive on a neighborhood of $C$.
    
\end{enumerate}
\end{prp}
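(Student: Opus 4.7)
My plan generalizes Lemma \ref{free_curve_PD_lmm}. If $C$ is free, Lemma \ref{free_curve_PD_lmm} yields a semipositive $2$-form $\omega$ on $X$ Poincaré dual to $C$; moreover, part \ref{CurveMS11_it} of Proposition \ref{CurveMS_prp} implies that the tangent planes of the deformation family cover all $J$-invariant $2$-planes at each point on a neighborhood of $C$ (if all tangents were a single plane, the submoduli of curves with prescribed tangent would have codimension $0$, not $2$), so $\omega$ is strictly positive there, and $\omega_C := \omega$ together with any $K \geq 0$ suffices. Assume henceforth that $C$ is not free.

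Parameterize $C$ by $u_0 : \wt{\Sigma} \lra X$. Since $\phi$ is semipositive and $u_0$ is pseudoholomorphic, $u_0^*\phi$ is pointwise nonnegative on $\wt{\Sigma}$; since $\int_C \phi > 0$, pick a point $x_0 = u_0(z_0)$ at which $u_0$ is immersed, $x_0 \in \textnormal{int}(\supp\phi)$, and $\phi_{x_0}$ is strictly positive on $T_{x_0}C$. Fix a coordinate ball $V \subset \textnormal{int}(\supp\phi)$ around $x_0$ in a chart chosen so that $\phi$ has a convenient local normal form. Pick a small open disk $D \subset \wt{\Sigma}$ about $z_0$ with $u_0(\overline{D}) \subset V$, and set $\Sigma = \wt{\Sigma} \setminus D$, $u = u_0|_\Sigma$. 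By Lemma \ref{boundary_conditions_lmm} with $M \geq 4g(\Sigma)+3$, obtain a totally real surface $Y \subset V$ with $\mu_Y^\cN(u) \geq M$; Proposition \ref{CurveMS_prp} then makes $\M_0$, $\M_1$, and $\M_1^\circ$ smooth near $[u]$, with $\ev: \M_1^\circ \lra X - Y$ a submersion. Choose a compactly supported nonnegative top form $d\nu$ on $\M_0$ with $\int_{\M_0} d\nu = 1$, supported close enough to $[u]$ that $u'(\prt\Sigma) \subset V$ for all $[u'] \in \supp d\nu$.

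Define $\omega_1 := \ev_* \ff^* d\nu$ as a $2$-current on $X$ via $\omega_1(\alpha) = \int_{\M_1} \ff^* d\nu \wedge \ev^*\alpha$. By Lemma \ref{submersion_pushforward_lmm}, $\omega_1$ is smooth on $X - Y$; by the computation of Lemma \ref{free_curve_PD_lmm} (the fiberwise component of $\ev^*\alpha$ is its restriction to the $J$-invariant tangent plane $du'(T\Sigma)$), $\omega_1$ is semipositive and $J$-invariant as a current on all of $X$, and by part \ref{CurveMS11_it} of Proposition \ref{CurveMS_prp} it is strictly positive on a neighborhood of $C \setminus V$. The current $\omega_1$ is not closed, but $\prt\omega_1$ is supported in $V$ (near $\bigcup_{u'} u'(\prt\Sigma)$). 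Close it up by taking $\tau$ to be the integration along the fiber of a smooth family of embedded $2$-disks $\{D_{u'} \subset V\}_{[u'] \in \supp d\nu}$ with $\prt D_{u'} = -u'(\prt\Sigma)$ and $D_u = u_0(D)$; then $\prt\tau = -\prt\omega_1$, each $u'(\Sigma) + D_{u'}$ is a closed $2$-chain homologous to $C$, so $\omega_1 + \tau$ is a closed $2$-current on $X$ with $[\omega_1 + \tau] = [C]$ in $H_2(X,\R)$.

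Finally, apply the de Rham smoothing operator of Lemma \ref{derham_smoothing_for_cones_lemma}, inside a chart covering $V$, to produce a smooth closed $2$-form $\omega_C$ on $X$ equal to $\omega_1 + \tau$ outside a slightly enlarged subset still contained in $\supp\phi$. Smoothing preserves closedness and cohomology class, giving (1); and since both $\tau$ and the smoothing are localized in $\supp\phi$, outside $\supp\phi$ we have $\omega_C = \omega_1$, which is $J$-invariant on $X - Y$, giving (2). On $X \setminus V$, $\omega_C = \omega_1$ is semipositive and strictly positive on a neighborhood of $C \setminus V$, so $\omega_C + K\phi$ inherits these properties for any $K \geq 0$. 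Inside $V$, the main obstacle is controlling semipositivity in directions where $\phi$ degenerates: I handle this by choosing the disks $D_{u'}$ so that $\tau$ is nonnegative on the kernel directions of $\phi$ (exploiting the adapted chart to align tangent planes with the positive directions of $\phi$), and by noting that the de Rham smoothing can be taken arbitrarily close to the identity so that this nonnegativity survives. For $K$ large, $\omega_C + K\phi$ is then semipositive on $V$, and strictly positive on a neighborhood of $C \cap V$ by continuity from the strict positivity of $\phi_{x_0}$ on $T_{x_0}C$. The crux of the proof is this final coordination of the cone-and-smooth construction with the kernel structure of $\phi$, which is why the last two steps must be carried out within a chart adapted to $\phi$.
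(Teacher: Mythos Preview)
Your approach mirrors the paper's: delete a disk from $C$ inside $\supp(\phi)$, use Lemma~\ref{boundary_conditions_lmm} and Proposition~\ref{CurveMS_prp} to obtain a large deformation family of the bordered curve, push forward to get a $J$-invariant semipositive current $\omega_1$ away from the chart, cone off the boundaries to obtain a closed current, and apply de Rham smoothing inside the chart. The architecture is right; the paper uses explicit radial cones in the chart in place of your ``smooth family of embedded $2$-disks~$D_{u'}$'' and makes the $C^1$-closeness of all resulting surfaces to $u$ quantitative (so that $\psi^*\phi$ is positive on every tangent bivector involved, yielding $\omega_C'\wedge\phi\ge0$ pointwise), but these are sharpenings of the same idea rather than a different route.

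The genuine gap is your verification of~\ref{main_prp_posi}. You claim strict positivity on a neighborhood of $C\cap V$ follows ``by continuity from the strict positivity of $\phi_{x_0}$ on $T_{x_0}C$.'' But positivity of $\phi$ on the single $J$-plane $T_{x_0}C$ says nothing about $\phi(v,Jv)$ for $v$ normal to $C$; $\phi$ may vanish there. Inside $V$, the form $\omega_C$ has been altered by coning and by the (non-holomorphic) de Rham diffeomorphisms, so you have no guaranteed positivity from $\omega_C$ in those directions either. Neither summand of $\omega_C+K\phi$ therefore forces strict positivity along $C\cap V$. The paper does not attempt to extract~\ref{main_prp_posi} from a single construction. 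Having built a form $\omega_C'$ satisfying \ref{main_prp_PD}--\ref{main_prp_semi} together with strict positivity only near $C\setminus\supp(\phi)$, it splits $\phi=\phi_1+\phi_2+\phi_3$ with each $\phi_i$ semipositive, $\int_C\phi_i>0$ for $i=1,2$, and $\supp(\phi_1)\cap\supp(\phi_2)=\emptyset$, runs the entire construction once with $\phi_1$ and once with $\phi_2$, and sets $\omega_C:=\tfrac12(\omega_1'+\omega_2')$. Since the two ``bad'' regions lie in $\supp(\phi_1)$ and $\supp(\phi_2)$ respectively and are disjoint, the average is strictly positive on a neighborhood of all of $C$. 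This disjoint-supports averaging trick is the missing ingredient in your argument.
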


\begin{proof}
We show that there exist a closed 2-form $\omega_C'$ and a constant $K' > 0$ such that $\omega_C' + K'\varphi$ is strictly positive on a neighborhood of $C - \supp(\varphi)$ and \ref{main_prp_PD}, \ref{main_prp_J}, and \ref{main_prp_semi} hold with $\omega_C$ and $K$ replaced by $\omega_C'$ and $K'$, respectively. In the last paragraph, we use this to prove the full claim.\\

Fix a Riemannian metric on $X$. Let $u_0: \Sigma \lra X$ be a pseudoholomorphic parameterization of $C$. As $C$ is irreducible, $\Sigma$ is connected. By \cite[Lemma 2.4.1, Proposition 2.5.1]{MS}, $u_0$ is an embedding away from a finite set of points. By assumption, there is an embedded point of $C$ at which $u_0^*\varphi > 0$. Take a small contractible neighborhood $U \subset X$ of this point and a (continuous) field $\eta :=\xi\!\wedge\! J \xi$ of unit complex bivectors on $\overline{U}$ tangent to $C$ along $C$ so that $\varphi(\eta(x)) \!>\! 0$ for all $x \in \overline{U}$. With $\conestr{\delta} \!\subset\! \bigwedge^{\!2}TX$ as in (\ref{Lambda_delta_eqn}), fix $\delta$ as in Lemma~\ref{bad_set_lmm} with $A=\overline{U}$.\\

Let $g$ be the genus of $\Sigma$ and $M = 4g + 3$. Let $D \!\subset \!\Sigma$ and $Y\!\subset\!U$ be as in Lemma~\ref{boundary_conditions_lmm} with $u=u_0$. With the notation as in Proposition~\ref{CurveMS_prp} with $u=u_0|_{\Si-D}$, let
$$\mathfrak{f}: \M_1 \lra \M_0~~\hbox{and}~~\textnormal{ev}: \M_1 \lra X$$
be the forgetful and evaluation maps, respectively; both maps are smooth, while $\f$ is also a proper submersion. For each $\varepsilon\!>\!0$, let $\d\volumeformm_\varepsilon$ be a nonnegative form of top degree on $\M_0$ that is supported on curves $\varepsilon$-close in $C^1$ to $u_0|_{\Sigma-D}$, is nonzero at $u_0|_{\Sigma-D}$, and satisfies $\int_{\M_0}\d\volumeformm_\varepsilon =1$.\\

Define the $2$-current $T_\varepsilon \bydefinition \textnormal{ev}_*( \mathfrak{f}^* \d\volumeformm_\varepsilon)$ on $X$ as in (\ref{pushforward_eqn}) with $f=\ev$.
By (\ref{integrated_family_eqn}),
\begin{equation}
\label{average_integral_eqn}
T_\varepsilon(\alpha) = \int_{u \in\M_0}\bigg(\int_{\Sigma - D} u^*\alpha\bigg)\d\volumeformm_\varepsilon \quad \forall \alpha \in \Omega^2(X).
\end{equation}

We now show that $T_\varepsilon$ is well-behaved on $X-U$. Since $\ev(\partial\M_1) \!\subset \!Y\!\subset\!U$, $T_\varepsilon$ is smooth on a neighborhood $W_\varepsilon
\!\subset\!X$ of $X-U$ by Lemma~\ref{submersion_pushforward_lmm}, i.e.~$T_\varepsilon|_{W_\varepsilon}=T_{\omega_\varepsilon}$ for some (smooth) 2-form $\omega_\varepsilon$ on $W_\varepsilon$. Since each map $u$ is $J$-holomorphic, (\ref{average_integral_eqn}) implies that $T_\varepsilon(\alpha) =0$ for $J$-anti-invariant~$\alpha$. By Lemma~\ref{invariant_dual_lmm}, $\omega_\varepsilon$ is thus a $J$-invariant 2-form on $W_\varepsilon$.\\

Let $x_0 \in  W_\varepsilon$ and $v_0 \in T_{x_0}X$. By (\ref{pushforward_acting_on_vector_eqn}),
$$\omega_\varepsilon(v_0,Jv_0) = \int_{\textnormal{ev}^{-1}(x_0)}\{\mathfrak{f}^*\d\volumeformm_\varepsilon\}(-,\widetilde{v_0},\widetilde{Jv_0}),$$
where $\wt{v_0}, \wt{Jv_0} \in \Gamma(\ev^{-1}(x_0);T\M_1)$ are the lifts of $v_0$ and $Jv_0$, respectively, along $\d\ev$.
The sign of $\omega_\varepsilon(v_0,Jv_0)$ depends on the orientation of the fiber, which we now describe. The manifold $\M_0$ is oriented so $\d\volumeformm_\varepsilon$ is nonnegative and $\M_1$ is oriented by giving the fibers of the forgetful map $\mathfrak{f}$ their complex orientations as Riemann surfaces. The manifold $X$ is oriented by the almost complex structure $J$. The orientation of the fiber of ev is determined by requiring that the wedge product of the lift of an orientation form on $X$ and an orientation form on $\textnormal{ev}^{-1}(x_0)$ gives a form along $\textnormal{ev}^{-1}(x_0)$ which agrees with the orientation on $\M_1$. It follows that $\{\mathfrak{f}^*\d\volumeformm_\varepsilon\}(-,\widetilde{v_0},\widetilde{Jv_0})|_{T\ev^{-1}(x_0)}$ at a point $(u, z) \in \textnormal{ev}^{-1}(x_0)$ is zero if $v_0 \in \mathcal{T} u|_z$ and a positively oriented form in the top degree otherwise. Therefore, $\omega_\varepsilon(v_0, Jv_0) \geq 0$, i.e.~$\omega_\varepsilon$ is semipositive on $W_\varepsilon$.\\

Suppose, in addition, that $x_0 \in u_0(\Si-\overline{D})$. By (\ref{CurveMS10_e}) and (\ref{CurveMS11_e}), there then exists a surface $u \in \M_0$ arbitrarily close to $u_0|_{\Si-D}$ passing through $x_0$ transverse to $v_0\wedge Jv_0$. Therefore, $\omega_\varepsilon$ is a strictly positive form along $C - U$. As strict positivity is an open condition, $\omega_\varepsilon$ is strictly positive on a neighborhood $W'_\varepsilon \subset W_\varepsilon$ of $C-U$.\\

We have shown that $\omega_\varepsilon$ is well-behaved on $X-U$. We now modify it on $U$ to produce a 2-form~$\omega_C'$ which is well-behaved everywhere. By (\ref{average_integral_eqn}) and Stokes' theorem,
$$\{\partial T_\varepsilon \}(\beta) := T_\varepsilon(\d\beta) = -\int_{u\in\M_0}\bigg(\int_{\partial\overline{D}}u^*\beta\bigg)\d\volumeformm_\varepsilon \quad \forall \beta\in \Omega^1(X).$$
By Lemma~\ref{cone_filling_lmm} with $\Lambda = \conestr{\delta}$ and Remark~\ref{family_of_boundaries_rmk}, there exists a current~$T$ supported in $U$ with $\partial T = - \partial T_\varepsilon$. For $\delta_1 \in (0,\delta)$, if $\varepsilon$ is sufficiently small, both $T_\varepsilon$ and $T$ are $\conestr{\delta_1}$-currents in $\overline{U}$. Thus, $T_\varepsilon + T$ is a cycle which is a $\conestr{\delta_1}$-current on $\overline{U}$ and smooth on a neighborhood of $X - U$. Therefore, by Lemma~\ref{smoothing_lmm}, there exists an exact current $S$ supported in $U$ such that
$$T_\varepsilon' :=T_\varepsilon +T+S$$ is a smooth, closed current that is a $\conestr{\delta}$-current on $\overline{U}$. Let $\omega_C'$ be the 2-form on $X$ so that $T_\varepsilon'=\curindby{\omega_C'}$. By Lemma~\ref{bad_set_lmm}, there exists $K'>0$ such that $\omega_C' + K'\varphi$ is semipositive on $\overline{U}$. As $\omega_C'$ is equal to $\omega_\varepsilon$ on a neighborhood of $X-U$, $\omega_C'$ is $J$-invariant on $X-U$ and $\omega'_C + K'\varphi$ is semipositive everywhere and strictly positive on a neighborhood of $C-U$.\\

The last property of $\omega'_C$ that remains to be proved is that it is Poincaré dual to $C$. As the curves in $\M_0$ are homologous relative to $U$, it follows from (\ref{average_integral_eqn}) that
$$T_\varepsilon(\alpha) = \int_{u \in\M_0}\bigg(\int_{\Sigma - D} u^*\alpha\bigg)\d\volumeformm_\varepsilon = \bigg(\int_{\Sigma-D}u_1^*\alpha\bigg)\bigg(\int_{\M_0}\volumeformm_
\varepsilon\bigg) = \int_{\Sigma-D}u_1^*\alpha$$
for $\alpha\in \Omega^2(X)$ supported in $X-U$. This implies the second equality in
$$[\omega'_C] = [T_\varepsilon] = u_{0*}[\Sigma-D] = [C] \in H_2(X,U;\R).$$
As $U$ is contractible, the long exact sequence in relative homology implies that the equality $[\omega'_C] = [C]$ in $H_2(X,U;\R)$ lifts to an equality $[\omega'_C] = [C]$ in $H_2(X;\R)$. Thus, $\omega'_C$ is Poincaré dual to~$C$.\\

We have thus constructed a 2-form $\omega_C'$ on $X$ and $K'>0$ such that $\omega_C' + K'\varphi$ is positive on a neighborhood of $C - \supp(\varphi)$ and \ref{main_prp_PD}, \ref{main_prp_J}, and \ref{main_prp_semi} hold with $\omega$ replaced by $\omega_C'$ and $K$ by $K'$. We can decompose $\varphi$ as $\varphi = \varphi_1 + \varphi_2 + \varphi_3$, where $\varphi_i$ is semipositive for $i \in \{1,2,3\}$, $\int_C \varphi_i > 0$ for $i \in \{1,2\}$, and $\supp(\varphi_1) \cap \supp(\varphi_2) = \emptyset$. Construct $\omega_1'$ and $K_1' > 0$ with respect to $\varphi_1$ and $\omega_2'$ and $K_2' > 0$ with respect to $\varphi_2$, as before. The form $\omega_C := \frac{1}{2}(\omega_1' + \omega_2')$ and the positive constant $K := \frac{1}{2}(K_1' + K_2')$ satisfy \ref{main_prp_PD}, \ref{main_prp_J}, \ref{main_prp_semi}, and \ref{main_prp_posi}, as claimed.
\end{proof}

\section{Proofs of the main theorems}\label{main_proofs_sec}

In this section, we prove the results stated in Section \ref{Introduction_sec}. Proposition \ref{PD_prp} implies Theorems~\ref{stable_pos_int_thm} and~\ref{PosInter_thm}. Theorem \ref{PosInter_thm} then implies Theorem \ref{Curves2Symp_thm}.

\begin{proof}[{\bf{\emph{Proof of Theorem \ref{stable_pos_int_thm}}}}]
As $C$ is compact, there is a finite set of balls $B_i \subset X$ of diameter $\varepsilon$ intersecting $C$ such that $C \subset \bigcup B_i$. For each $i$, take a semipositive form $\varphi_i$ supported inside $B_i$ such that $\int_C \varphi_i > 0$. By Proposition \ref{PD_prp}, for each $i$, there exist a closed 2-form $\omega_i$, a positive constant~$K_i$, and an open set $U_i \subset X$ containing $C$ such that $\omega_i$ is Poincaré dual to $C$ and $\omega_i + K_i \varphi_i$ is a semipositive form which is strictly positive on $U_i$. The intersection $\bigcap U_i$ is an open set containing~$C$. As $C$ is compact, there exists $\delta > 0$ such that every point of $X$ $\delta$-close to $C$ is contained in $\bigcap U_i$. Let $S$ be a closed pseudoholomorphic curve such that $\langle [S], [C] \rangle \leq 0$ and some point of $S$ is $\delta$-close to $C$. Since $S$ intersects each $U_i$,
$$0 < \int_S (\omega_i + K_i \varphi_i) = \langle [S], [C] \rangle + K_i \int_S \varphi_i \leq K_i \int_S \varphi_i.$$
Therefore, $S$ intersects the support of each $\varphi_i$ and thus each $B_i$. As the $B_i$ cover $C$, each point of $C$ is $\varepsilon$-close to $S$.
\end{proof}

\begin{proof}[{\bf{\emph{Proof of Corollary~\ref{Hausdorff_crl}}}}]
Suppose $\langle[S],[C] \rangle > 0$ and for every $\delta>0$ there exists a closed pseudoholomorphic curve $S'\subset X$ which is $\delta$-close to $S$ in the Hausdorff metric and satisfies $\langle[S],[C] \rangle \leq 0$. Given $\varepsilon > 0$, let $\delta > 0$ be as in Theorem~\ref{stable_pos_int_thm} and $S'\subset X$ be a closed pseudoholomorphic curve which is $\min(\varepsilon,\delta)$-close to $S$ in the Hausdorff metric and satisfies $\langle[S'], [C] \rangle \leq 0$. Since $S\cap C\neq \emptyset$, as least one point of $S'$ is $\delta$-close to $C$ and thus every point of $C$ is $\varepsilon$-close to $S'$ and $2\varepsilon$-close to $S$. As $\varepsilon$ is arbitrary, $C \subset S$.
\end{proof}

\begin{proof}[{\bf{\emph{Proof of Theorem \ref{PosInter_thm}}}}]
By Proposition \ref{PD_prp}, there exist a closed 2-form $\omega_C$ Poincaré dual to $C$ and a positive constant $K$ such that $\omega_C + K \varphi$ is a semipositive form which is strictly positive on $C$ and therefore somewhere on $\supp(T)$. By Lemma \ref{pos_on_supp_lmm},
$$0 < T(\omega_C + K\varphi) = T(\omega_C) + K T(\varphi) = T(\omega_C) = \langle [T] , [C] \rangle,$$
as claimed.
\end{proof}

\begin{proof}[{\bf{\emph{Proof of Proposition \ref{semipositive_cone_prp}}}}]
Let $T$ be a nonzero complex cycle on $X$. There exists a closed connected pseudoholomorphic curve $C$ such that $C \cap \supp(T) \neq \emptyset$ and $\int_C \varphi > 0$.
As $C$ is closed and connected, there are irreducible components $C_0, \dots, C_k$ of $C$ such that
\begin{equation}\label{chain_of_curves_eqn}
C_0 \cap \supp(T) \neq \emptyset, \quad \int_{C_k} \varphi > 0, \quad \textnormal{and} \quad  C_{i-1} \cap C_i \neq \emptyset \quad \forall i = 1, \dots, k. 
\end{equation}
Let $i \in \{0, 1, \dots, k \}$ be the largest integer such that $C_i \cap \supp(T) \neq \emptyset$. If $i < k$, then $C_i \not\subset \supp(T)$ by the last condition in (\ref{chain_of_curves_eqn}) and thus $\langle [T], [C_i] \rangle \neq 0$ by Corollary \ref{pos_int_crl}. If $i = k$, then either $T(\varphi) \neq 0$ or $\langle [T], [C_k] \rangle \neq 0$, by Theorem \ref{PosInter_thm}. Therefore, $[T] \neq 0 \in H_2(X,\R)$ and $(X,J)$ admits a taming symplectic structure by Proposition \ref{taming_prp}.
\end{proof}

\begin{proof}[{\bf{\emph{Proof of Theorem \ref{Curves2Symp_thm}}}}]
We first show that $\ref{umerc_it} \Rightarrow \ref{erc_it} \Rightarrow \ref{imgn_it} \Rightarrow \ref{fc_it}$.
Suppose $X$ has uncountably many embedded rational curves. As $X$ has only countably many integer homology classes, at least $2$ of these curves lie in the same homology class, which thus has nonnegative self-intersection number by the positivity of intersections. Therefore, $\ref{umerc_it} \Rightarrow \ref{erc_it}$. Trivially, $\ref{erc_it} \Rightarrow \ref{imgn_it}$. It follows from Example \ref{free_eg} that $\ref{imgn_it} \Rightarrow \ref{fc_it}$. Therefore, it suffices to consider the case when $X$ contains a free curve, $C_{\textnormal{free}}$.
Let $\varphi$ be a semipositive Poincaré dual to $C_\textnormal{free}$ as in Lemma \ref{free_curve_PD_lmm}. For every $x \in X$, there exists a closed connected pseudoholomorphic curve $C_x$ passing through $x$ and intersecting $C_\textnormal{free}$ positively. Therefore, $\int_{C_x} \varphi > 0$ and the result follows from Proposition \ref{semipositive_cone_prp}. 
%
\end{proof}


\appendix

\section{Proof of Proposition~\ref{CurveMS_prp}}
\label{CurveMS_sec}

\noindent
With the assumptions as in Proposition~\ref{CurveMS_prp}, let
$$q_u\!:u^*TX\lra\cN u \quad\hbox{and}\quad
S_u\bydefinitiontwo \big\{z\!\in\!\Si\!:\nd_zu\!=\!0\big\}\subset \Si\!-\!\prt\Si$$
be the quotient projection and the subset of the non-immersive points of~$u$, respectively.
For each $z\!\in\!S_u$, we define $\ord_zu\!\in\!\Z^+$ to be the order of~$z$
as a zero of the section~$\nd u$ of the complex line bundle $T^*\Si\!\otimes_{\C}\!\cT u$
over~$\Si$; this is well-defined by the Carleman Similarity Principle \cite[Theorem~2.3.5]{MS}.
Define
\begin{equation*}\begin{split}
\Ga(T\Si)&=\big\{\ze\!\in\!\Ga(\Si;T\Si)\!:
\ze|_{\prt\Si}\!\in\!\Ga\big(\prt\Si;T(\prt\Si)\!\big)\!\big\}, \\
\Ga(\cT u;\prt\Si)&=\big\{\xi\!\in\!\Ga(\Si;\cT u)\!:
\xi|_{\prt\Si}\!\in\!\Ga\big(\prt\Si;\nd u(T(\prt\Si)\!)\!\big)\!\big\},
\qquad\hbox{and}\\
\Ga(\cN u;Y)&=\big\{\xi\!\in\!\Ga(\Si;\cN u)\!:
\xi|_{\prt\Si}\!\in\!\Ga\big(\prt\Si;\cN_Y(\prt u)\!\big)\!\big\}.
\end{split}\end{equation*}

\vspace{.15in}

We denote by
\begin{equation*}\begin{split}
&D_{Y;J;u}\!:\Ga(u;Y)\!\bydefinitiontwo\!\big\{\xi\!\in\!\Ga(\Si;u^*TX)\!:
\xi|_{\prt\Si}\!\in\!\Ga\big(\prt\Si;\{\prt u\}^*TY\big)\!\big\}\\
&\hspace{2.5in}\lra \Ga^{0,1}_J(u)\!=\!\Ga\big(\Si;(T^*\Si)^{0,1}\!\otimes_{\C}\!u^*TX\big)
\end{split}\end{equation*}
the linearization of the $\dbar_J$-operator on the space of smooth maps 
from~$(\Si,\prt\Si)$ to~$(X,Y)$; see \cite[Section~1.2]{IvSh2}, for example. 
This is a real Cauchy-Riemann operator such~that 
\begin{equation*}\begin{split}
D_{Y;J;u}\big(\nd u\big(\Ga(T\Si)\!\big)\!\big)&\subset
\nd u\big(\Ga(\Si;(T^*\Si)^{0,1}\!\otimes_{\C}\!T\Si)\!\big)
\qquad\hbox{and}\\
D_{Y;J;u}\big(\Ga(\cT u;\prt\Si)\!\big)&\subset
\Ga\big(\Si;(T^*\Si)^{0,1}\!\otimes_{\C}\!\cT u\big);
\end{split}\end{equation*}
see \cite[Section~1.3]{IvSh2}.
Thus, $D_{Y;J;u}$ induces linear operators
\begin{equation*}\begin{split}
\ov{D}_{Y;J;u}\!:\Ga(u;Y)\big/\nd u\big(\Ga(T\Si)\!\big)
&\lra \Ga^{0,1}_J(u)\big/\nd u\big(\Ga(\Si;(T^*\Si)^{0,1}\!\otimes_{\C}\!T\Si)\!\big)\\
\hbox{and}\qquad
D_{Y;J;u}^{\cN}\!:\Ga(\cN u;Y)&\lra \Ga(\Si;(T^*\Si)^{0,1}\!\otimes_{\C}\!\cN u\big).
\end{split}\end{equation*}
The projection $q_u$ induces homomorphisms 
\BE{quker_e}  \wt{q}_{u;0}\!:\ker \ov{D}_{Y;J;u}\lra \ker D_{Y;J;u}^{\cN}
\quad\hbox{and}\quad
\wt{q}_{u;1}\!:\cok\,\ov{D}_{Y;J;u}\lra\cok\,D_{Y;J;u}^{\cN}\,.\EE

\vspace{.15in}

\noindent 
For $z_0\!\in\!\Si$, let 
\begin{alignat*}{2}
\Ga_{-z_0}(T\Si)&=\big\{\ze\!\in\!\Ga(T\Si)\!:\xi(z_0))\!=\!0\big\}, &\quad
\Ga_{-z_0}(\cN u;Y)&=\big\{\xi\!\in\!\Ga(\cN u;Y)\!:\xi(z_0)\!=\!0\big\},\\
\Ga_{-z_0}(u;Y)&=\big\{\xi\!\in\!\Ga(u;Y)\!:\xi(z_0)\!=\!0\big\},
 &\quad
\Ga_{-2z_0}(\cN u;Y)&=\big\{\xi\!\in\!\Ga_{-z_0}(\cN u;Y)\!:\na\xi|_{z_0}\!=\!0\big\}.
\end{alignat*}
We denote by
\begin{equation*}\begin{split}
\wt{D}_{Y;J;u;-z_0}\!:\Ga(u;Y)\big/\nd u\big(\Ga_{-z_0}(T\Si)\!\big)
&\lra \Ga^{0,1}_J(u)\big/\nd u\big(\Ga(\Si;(T^*\Si)^{0,1}\!\otimes_{\C}\!T\Si)\!\big)
\quad\hbox{and}\\
\ov{D}_{Y;J;u;-z_0}\!:\Ga_{-z_0}(u;Y)\big/\nd u\big(\Ga_{-z_0}(T\Si)\!\big)
&\lra \Ga^{0,1}_J(u)\big/\nd u\big(\Ga(\Si;(T^*\Si)^{0,1}\!\otimes_{\C}\!T\Si)\!\big)
\end{split}\end{equation*}
the linear operators induced by~$D_{Y;J;u}$.\\

\noindent
Suppose the operator~$\ov{D}_{Y;J;u}$ is surjective.
By the Implicit Function Theorem for Banach manifolds, as in \cite[Section~3.5]{MS},
a neighborhood~$\fM_0$ of~$[u]$ in~$\fM_{(g,b)}(X,Y)$ is then a smooth manifold.
Furthermore, the subspace $\fM_1\!\bydefinitiontwo\!\ff^{-1}(\fM_0)$ of~$\fM_{(g,b),1}(X,Y)$ 
is a smooth manifold with boundary so~that 
\BE{ndev_e1a}T_{[u',z_0]}\fM_1= \ker\wt{D}_{Y;J;u';-z_0} 
\quad\forall\,[u',z_0]\!\in\!\fM_1\EE
and the first map in~\eref{CurveMS1_e}
is a fiber bundle so that the fiber over $[u']\!\in\!\fM_0$ is the domain of~$u'$.
The second map in~\eref{CurveMS1_e} is smooth and its differential is given by
\BE{ndev_e1b} \nd_{[u',z_0]}\ev\big([\xi]\big)=\xi(z_0)
\quad\forall~[\xi]\!\in\!\ker\wt{D}_{Y;J;u';-z_0}.\EE

\vspace{.15in}

\noindent
If the operator~$\ov{D}_{Y;J;u}$ is surjective and
the second map in~\eref{CurveMS1_e} is a submersion, then the subspace 
\hbox{$\fM_1^{\circ}(x_0)\!\bydefinitiontwo\!\ev^{-1}(x_0)\!\cap\!\fM_1$} of~$\fM_1^{\circ}$ 
is a smooth submanifold satisfying~\eref{CurveMS10_e} and 
\BE{CurveMS10_e2a} 
T_{[u',z_0]}\!\big(\fM_1^{\circ}(x_0)\!\big)= \ker\ov{D}_{Y;J;u';-z_0} 
\quad\forall\,[u',z_0]\!\in\!\fM_1^{\circ}(x_0).\EE
The differential $\nd_{z_0}u'$ induces a smooth map~$\fd$ from~$\fM_1^{\circ}(x_0)$ to 
$T_{z_0}^*\Si\!\otimes_{\R}\!T_{x_0}X$ such~that
\BE{CurveMS10_e2b} 
q_u\!\circ\!\big\{\!\nd_{[u',z_0]}\fd\big([\xi]\big)\!\!\big\}
=q_u\!\circ\!\na\xi|_{z_0}\quad\forall~[\xi]\!\in\!\ker\ov{D}_{Y;J;u';-z_0}.\EE

\vspace{.15in}

\noindent
By \cite[Lemma~1.5.1]{IvSh2}, the second homomorphism in~\eref{quker_e} is an isomorphism,
the first homomorphism is surjective, and so is the homomorphism
\BE{IbSh2_e3} 
\ker\!\big(\wt{q}_{u;0}\big)\lra \bigoplus_{z\in S_u}\!\!\cT u|_z, \qquad
\xi\lra\big(\xi(z)\!\big)_{z\in S_u}\,.\EE
The statement of \cite[Lemma~1.5.1]{IvSh2} is made for closed surfaces, but 
the proof applies to $J$-holomorphic maps from bordered surfaces as in the statement 
of Proposition~\ref{CurveMS_prp}.
Alternatively, the statement of \cite[Lemma~1.5.1]{IvSh2} can be applied directly
after doubling~$\Si$, $T\Si$, $u^*TX$, and~$\cN u$ along~$\prt\Si$ 
as in the proofs of \cite[Theorem~$2'$]{HLS} and \cite[Section~3]{XCapsSetup}.
By \cite[Theorem~$2'$(ii)]{HLS}, the operator~$D_{Y;J;u}^{\cN}$ is onto if
\hbox{$\mu_Y^{\cN}(u)\!\ge\!4g\!+\!2b\!-\!3$}.
This establishes Proposition~\ref{CurveMS_prp}\ref{CurveMS0_it},
as well as the parts of~\ref{CurveMS1_it} concerning 
the one-marked space~$\fM_1$ and the forgetful morphism~$\ff$.\\

\noindent
We can assume $\mu_Y^{\cN}(u')\!\ge\!\mu_Y^{\cN}(u)\!\ge\!4g\!+\!2b\!-\!3$ 
for all $[u']\!\in\!\fM_0$
(i.e.~the maps~$u'$ close to~$u$ are no more singular than~$u$).
Let $[u',z_0]\!\in\!\fM_1^{\circ}$ and $\Si'$ be the domain of~$u'$.
Since 
$$\big[\nd u'(\ze)\big]\in \ker\wt{D}_{Y;J;u';-z_0} \quad\hbox{and}\quad
\nd_{[u',z_0]}\ev\big(\big[\nd u'(\ze)\big]\big)=\nd_{z_0}u'\big(\ze(z_0)\!\big)
\qquad\forall~\ze\!\in\!\Ga(\Si'),$$
the image of~\eref{ndev_e1b} contains~$\cT u'|_{z_0}$ if $z_0\!\not\in\!S_{u'}$.
If $z_0\!\in\!S_{u'}$, then this is the case by the surjectivity of~\eref{IbSh2_e3}.
By the surjectivity of the first homomorphism in~\eref{quker_e},
$\nd_{[u',z_0]}\ev$ is thus surjective if the homomorphism
$$\ker D_{Y;J;u'}^{\cN}\lra \cN u'|_{z_0}, \qquad \xi\lra\xi(z_0),$$
is surjective.
This is the case if the restriction
$$D_{Y;J;u';-z_0}^{\cN}\!:
\Ga_{-z_0}(\cN u';Y)\lra \Ga(\Si;(T^*\Si')^{0,1}\!\otimes_{\C}\!\cN u'\big)$$
of~$D_{Y;J;u'}^{\cN}$ is surjective.
By the twisting construction of \cite[Lemma~2.4.1]{Sh},
the kernel and cokernel of this operator are isomorphic to
the kernel and cokernel of a real Cauchy-Riemann operator~$D$ as above
with~$\cN u'$ replaced by the complex line bundle $\cN u'\!\otimes_{\C}\!\cO(-z_0)$,
with the same boundary condition on the domain.
The Maslov index of the resulting bundle pair is~$\mu_Y^{\cN}(u')\!-\!2$.
By \cite[Theorem~$2'$(ii)]{HLS}, such an operator~$D$ is thus onto if
\hbox{$\mu_Y^{\cN}(u')\!\ge\!4g\!+\!2b\!-\!1$}.
This establishes the part of Proposition~\ref{CurveMS_prp}\ref{CurveMS1_it} 
concerning the evaluation morphism~$\ev$, as well as~\ref{CurveMS10_it}.\\

\noindent
We now assume that $\mu_Y^{\cN}(u')\!\ge\!\mu_Y^{\cN}(u)\!\ge\!4g\!+\!2b\!-\!1$ 
for all $[u']\!\in\!\fM_0$.
Let $[u',z_0]\!\in\!\fM_1^{\circ}(v_0)$ and $\Si'$ be the domain of~$u'$.
By~\eref{CurveMS10_e2a}, \eref{CurveMS10_e2b}, and
the surjectivity of the first homomorphism in~\eref{quker_e},
the homomorphism 
$$q_{u'}\!\circ\!\nd_{[u',z_0]}\fd\!:
T_{[u',z_0]}\!\big(\fM_1^{\circ}(x_0)\!\big)\lra T_{z_0}^*\Si'\!\otimes_{\C}\!\cN u'|_{z_0}$$
is surjective if the homomorphism
$$\ker D_{Y;J;u';-z_0}^{\cN}\lra T_{z_0}^*\Si'\!\otimes_{\C}\cN u'|_{z_0}, 
\qquad \xi\lra\na\xi|_{z_0},$$
is surjective.
This is the case if the restriction
$$D_{Y;J;u';-2z_0}^{\cN}\!:
\Ga_{-2z_0}(\cN u';Y)\lra \Ga(\Si;(T^*\Si')^{0,1}\!\otimes_{\C}\!\cN u'\big)$$
of~$D_{Y;J;u'}^{\cN}$ is surjective.
By the twisting construction of \cite[Lemma~2.4.1]{Sh},
the kernel and cokernel of this operator are isomorphic to
the kernel and cokernel of a real Cauchy-Riemann operator~$D$ as above
with~$\cN u'$ replaced by the complex line bundle $\cN u'\!\otimes_{\C}\!\cO(-2z_0)$,
with the same boundary condition on the domain.
The Maslov index of the resulting bundle pair is~$\mu_Y^{\cN}(u')\!-\!4$.
By \cite[Theorem~$2'$(ii)]{HLS}, such an operator~$D$ is thus onto if
\hbox{$\mu_Y^{\cN}(u')\!\ge\!4g\!+\!2b\!+\!1$}.
This establishes Proposition~\ref{CurveMS_prp}\ref{CurveMS11_it}.

\vspace{.3in}

{\it Department of Mathematics, Stony Brook University, Stony Brook, NY 11794\\
spencer.cattalani@stonybrook.edu}


\begin{thebibliography}{99}

\bibitem{Bedford} E.~Bedford and B.A.~Taylor,
{\it A new capacity for plurisubharmonic functions}
Acta~Math.~149 (1982), no.~1-2, 1--40

\bibitem{BottTu} R.~Bott and L.~Tu, 
{\it Differential Forms in Algebraic Topology}, Springer-Verlag, 1982

\bibitem{Cattalani} S.~Cattalani,
{\it Coarsely holomorphic curves and symplectic topology},
Math.~Ann.~391 (2025), no.~2, 2925--2941

\bibitem{Demailly} J.\,P.~Demailly,
{\it Regularization of closed positive currents and intersection theory},
J.~Algebraic~Geom.~1 (1992), no.~3, 361--409

\bibitem{deRham} G.~de Rham, 
{\it Differentiable Manifolds: Forms, Currents, Harmonic Forms},
Springer-Verlag, 1984

\bibitem{DonLefschetz} S.\,K.~Donaldson,
{\it Lefschetz pencils on symplectic manifolds},
J.~Differential~Geom.~53 (1999),~no.~2, 205--236

\bibitem{DonConj} S.\,K.~Donaldson,
{\it Two-forms on four-manifolds and elliptic equations},
math/0607083

\bibitem{DonSmith} S.\,K.~Donaldson and I.~Smith,
{\it Lefschetz pencils and the canonical class for symplectic four-manifolds},
Topology~42 (2003),~no.~4, 743--785



\bibitem{FF} H.~Federer and H.~Fleming,
{\it Normal and integral currents},
Ann.~of~Math.~72 (1960),~no.~3, 458--520

\bibitem{XCapsSetup} P.~Georgieva and A.~Zinger,
{\it The moduli space of maps with crosscaps: Fredholm theory 
and orientability},  Comm.~Anal.~Geom.~23 (2015), no.~3, 81--140

\bibitem{Gr} M.~Gromov,
{\it Pseudoholomorphic curves in symplectic manifolds},
Invent.~Math.~82 (1985), no.~2, 307--347

\bibitem{Gue} V.~Guedj,
{\it Courants extrémaux et dynamique complexe},
Ann.~Sci.~\'{E}cole~Norm.~Sup.~(4)~38 (2005), no.~3, 407--426

\bibitem{HLS} H.~Hofer, V.~Lizan, and J.~Sikorav, 
{\it On genericity for holomorphic curves in four-dimensional
almost-complex manifolds}, J.~Geom.~Anal.~7 (1997), no.~1, 149-–159


\bibitem{IvSh1} S.~Ivashkovich and V.~Shevchishin,
{\it Pseudo-holomorphic curves and envelopes of meromorphy
of two-spheres in $\mathbb{CP}^2$},
math/9804014


\bibitem{IvSh2} S.~Ivashkovich and V.~Shevchishin,
{\it Structure of the moduli space in a neighborhood of a cusp-curve and meromorphic hulls},
Invent.~Math.~136 (1999), no.~3, 571-–602


\bibitem{McDuff} D.~McDuff,
{\it The structure of rational and ruled symplectic 4-manifolds},
J.~Amer.~Math.~Soc.~3 (1990), no.~3, 679--712

\bibitem{MS} D. McDuff and D.~Salamon, 
{\it $J$-holomorphic Curves and Symplectic Topology},
Colloquium Publications~52, AMS, 2012

\bibitem{McKay} B.~McKay,
{\it Smooth projective planes},
Geom.~Dedicata~116 (2005), 157--202

\bibitem{Milnor} J.~Milnor,
{\it Topology from the Differentiable Viewpoint},
University Press of Virginia, 1965


\bibitem{Schwartz} L.~Schwartz,
{\it Th\'eorie des {D}istributions}
Publications de l'Institut de Math\'ematique de l'Universit\'e{} de Strasbourg, Herman, 1966

\bibitem{Serre} J.\,P.~Serre,
{\it Cohomologie et fonctions de variables complexes},
Séminaire Bourbaki, Vol. 2, Exp. No. 71, 213--218,
Société Mathématique de France, 1995

\bibitem{Sh} V.~Shevchishin, 
{\it Pseudoholomorphic curves and the symplectic isotopy problem},
math/0010262

\bibitem{Sullivan} D.~Sullivan, 
{\it Cycles for the dynamical study of foliated manifolds and complex manifolds},
Invent.~Math.~36 (1976), 225--255

\bibitem{TaubesUnique} C.~Taubes,
{\it The {S}eiberg-{W}itten and {G}romov invariants},
Math.~Res.~Lett.~2 (1995), no.~2, 221--238

\bibitem{Taubes} C.~Taubes, 
{\it Tamed to compatible: symplectic forms via moduli space integration}, 
J.~Symplectic Geom.~9 (2011), no.~2, 161--250

\bibitem{Varolin} D.~Varolin,
{\it Riemann Surfaces by Way of Complex Analytic Geometry},
AMS, 2011

\bibitem{Wendl} C.~Wendl,
{\it Holomorphic Curves in Low Dimensions: From Symplectic Ruled Surfaces to Planar Contact Manifolds},
Springer, 2018

\bibitem{TaubesCurrents} W.~Zhang,
{\it From Taubes currents to almost K\"{a}hler forms},
Math.~Ann.~356 (2013), no.~3, 969--978

\bibitem{Zinger} A.~Zinger,
{\it Real {R}uan-{T}ian perturbations},
math/1701.01420
















\end{thebibliography}
\end{document}